\newcommand{\R}         {{\mathbb{R}}}
\newcommand{\Log}         {{\mathrm{Log}}}
\newcommand{\Poiss}         {{\mathrm{Poiss}}}
\newcommand{\Diff}         {{\mathrm{Diff}}}
\newcommand{\id}         {{\mathrm{id}}}
\newcommand{\rmap}{\longrightarrow}
\newcommand{\diffto}{\xrightarrow{\raisebox{-0.2 em}[0pt][0pt]{\smash{\ensuremath{\sim}}}}}
\newenvironment{remark}[1][Remark]{\begin{trivlist}
\item[\hskip \labelsep {\bfseries #1}]}{\end{trivlist}}
\newtheorem{theorem}{Theorem}
\newtheorem{lemma}{Lemma}
\newtheorem{definition}{Definition}
\newtheorem{example}{Example}
\newtheorem{proposition}{Proposition}
\newtheorem{corollary}{Corollary}
\title[Deformations of log symplectic structures]{Deformations of log symplectic structures}
\author{Ioan M\u{a}rcu\cb{t}}
\address{Depart. of Math., Univ. of Illinois at Urbana-Champaign, Urbana IL 61801, USA}
\email{marcut@illinois.edu}
\author{Boris Osorno Torres}
\address{Depart. of Math., Utrecht University, 3508 TA, Utrecht, The Netherlands}
\email{b.osornotorres@uu.nl}
\begin{document}

\begin{abstract}
We describe the space of Poisson bivectors near a log symplectic structure up to small diffeomorphisms.
\end{abstract}

\maketitle

\section{Introduction}

A Poisson structure on a smooth manifold $M$ is a Lie bracket on the space of functions $C^{\infty}(M)$ that satisfies the Leibniz rule
\[\{f,gh\}=\{f,g\}h+g\{f,h\},\]
for all $f,g,h\in C^{\infty}(M)$. Equivalently, a Poisson structure is given by a bivector $\pi\in \Gamma(\wedge^2
TM)$ that satisfies the equation $[\pi,\pi]=0$, where $[\cdot,\cdot]$ denotes the Schouten bracket. The bivector and
the bracket are related by
\[\{f,g\}=\pi(df,dg).\]
From this point of view, a symplectic structure is the same as a non-degenerate Poisson structure. The corresponding
two-form is given by $\omega=\pi^{-1}$, and non-degeneracy is equivalent to $\wedge^n \pi$ being nowhere zero,
where $\dim M=2n$. Generalizing this situation, a Poisson structure $\pi$ on a manifold $M$ of dimension $2n$ is
called \textit{log symplectic} if the map
\[\wedge^n \pi: M \rmap \bigwedge\nolimits^{\!2n} TM, \ \ x \mapsto \wedge^n \pi(x)\]
is transverse to the zero section. These structures are very close to being symplectic, as they degenerate along a
codimension-one submanifold, and therefore, many of the results of symplectic geometry can be extended to this
framework. Note that in this paper, log symplectic manifolds are not required to be orientable.

Initially, log symplectic manifolds were considered on manifolds with boundary in \cite{Nest.1996}, in the context of deformation quantization, and were studied using the tools of b-geometry developed in \cite{Melrose.1993}. On compact oriented surfaces (without boundary), these structures were fully classified in \cite{Radko.2002}. More recently, the systematic study of log symplectic structures from \cite{Guillemin.Eva.2012} attracted a lot of attention and interest. As a result, several new papers appeared studying different facets of the geometry of log symplectic manifolds \cite{Gualtieri.2012,MO,Cavalcanti.2013,Scott,PDE}.

\vspace{0.2cm}

In this paper we give an explicit description of the space of Poisson structures $C^1$-close to a given log symplectic structure modulo small diffeomorphisms. In the general case, the problem of describing deformations of Poisson structures is poorly understood, as the nature of the deformation space depends to a great extent on the structure under consideration, with some structures showing rigidity phenomena \cite{Conn} while for others the deformation space is infinite dimensional \cite{Marcut.Lie.Sphere}. The main difficulty (in comparison for example with complex geometry) arises from the non ellipticity of the complex describing the infinitesimal deformations, called the Poisson complex. For log symplectic manifolds, the Poisson complex is elliptic outside a codimension-one hypersurface, and this already indicates that the problem is more tractable in this case. A similar behavior is displayed by $b^k$-symplectic structures (introduced in \cite{Scott}), and the authors expect that deformations of these structures can be studied in a similar fashion to obtain an analogue description of the space of $C^k$-deformations.

\vspace{0.2cm}

The geometry of a Poisson manifold can be highly nontrivial, making even local results very hard. The global picture
(e.g.\ of the distribution of symplectic leaves) can also be rather wild and unexpected.
Log symplectic structures have a very specific geometry,
which is more accessible, but still nontrivial enough to allow for interesting results. \emph{The singular locus} of a
log symplectic structure $(M,\pi)$ is the codimension-one Poisson submanifold $Z:=(\wedge^n\pi)^{-1}(0)$. More insight into the geometry of a log symplectic structure is gained when looking at the inverse of $\pi$. For this,
fix $E\to M$ a tubular neighborhood of $Z$ in $M$. For a metric $x\mapsto |x|$ on $E$, fix a function
$\lambda:M\backslash Z\to (0,\infty)$, satisfying $\lambda(x):=|x|$, for $x\in E$ with $|x|\leq 1/2$, and
$\lambda\equiv1$ on $M\backslash \{x\in E: |x|<1\}$. Using this function, the inverse $\pi$ can be written on $M\setminus Z$ in the
following form
\begin{equation}\label{inverse}
\pi^{-1}=\alpha+d\log (\lambda)\wedge p^*(\theta),
\end{equation}
where $\alpha$ is a closed two-form on $M$, $\theta$ is a closed one-form on $M$ and $p:E\to Z$ is the projection. The
pair consisting of $\eta:=\alpha|_{Z}\in \Omega^2(Z)$ and $\theta\in \Omega^1(Z)$ is a \emph{cosymplectic
structure} on $Z$, i.e.\ $\eta$ and $\theta$ are closed and $\theta\wedge \eta^{n-1}$ is a volume form. The
cosymplectic structure determines a pair $(\pi_Z,V)$, where $\pi_Z$ is a Poisson structure on $Z$ whose symplectic foliation is given by the kernel of
$\theta$ endowed with the pullback of $\eta$, and $V$ is the vector field such that $\iota_V\eta=0$ and $\theta(V)=1$. Moreover, this gives a one-to-one correspondence between cosymplectic structures and regular corank-one Poisson structures endowed with a transverse Poisson vector field (see e.g. \cite{Guillemin.Eva.2011}).

In this paper we show that, up to diffeomorphism, there are only two ways to deform log symplectic structures on compact manifolds. These are described as follows:
\begin{enumerate}[i.]
\item The first type of deformation is the \emph{gauge transformation} by a (small enough) closed two-form
$\varpi\in \Omega^2(M)$. This is a general operation in Poisson geometry, which transforms $\pi$ by adding the
restriction of $\varpi$ to the symplectic form on each leaf of $\pi$.
\item The second type of deformation is specific to log symplectic structures and it transforms $\pi$ locally around $Z$. For a (small enough)
closed one-form $\gamma\in \Omega^1(Z)$, the transformed Poisson structure is also log symplectic with singular locus $Z$,
but with foliation on $Z$ given by the kernel of $\theta+\gamma$.
\end{enumerate}
These transformations can be described algebraically in terms of their inverses on $M\backslash Z$; the result of
transforming $\pi$ by the pair $(\varpi,\gamma)\in \Omega^2(M)\times\Omega^1(Z)$ is given by
\begin{equation}\label{tranformed}
\left(\pi^{\varpi}_{\gamma}\right)^{-1}:=\pi^{-1}+\varpi+d \log(\lambda)\wedge p^*(\gamma).
\end{equation}

The cosymplectic structure on $Z$ induced by $\pi^{\varpi}_{\gamma}$ is the pair
$(\eta+\varpi|_{Z},\theta+\gamma)$.

In fact, these two types of deformations cover all Poisson structures near a log symplectic structure.

\begin{theorem}\label{deformations1}
Let $(M,\pi,Z)$ be a compact log symplectic manifold. Consider $\varpi_1,\ldots,\varpi_l$ closed two-forms on $M$
and $\gamma_1,\ldots,\gamma_k$ closed one-forms on $Z$ such that their cohomology classes form a basis of
$H^2(M)$ and of $H^1(Z)$ respectively. For $\epsilon\in\mathbb{R}^l$ and $\delta\in \mathbb{R}^k$, denote by
$\varpi_{\epsilon}:=\sum_{i=1}^l\epsilon_i\varpi_i$ and $\gamma_{\delta}:=\sum_{i=1}^k\delta_i\gamma_i$. Then,
\begin{enumerate}[(a)]
\item for all small enough $\epsilon\in \mathbb{R}^l$ and $\delta\in \mathbb{R}^k$, we have that
$\pi^{\varpi_{\epsilon}}_{\gamma_{\delta}}$ (given by (\ref{tranformed})) is a log symplectic structure;
\item there is a $C^1$-open neighborhood $\mathcal{U}\subset \Gamma(\wedge^2 TM)$ around $\pi$, such that every Poisson structure
$\pi'\in \mathcal{U}$ is isomorphic to $\pi^{\varpi_{\epsilon}}_{\gamma_{\delta}}$ for some vectors
$\epsilon\in\mathbb{R}^l$, $\delta\in \mathbb{R}^k$;
\item there is a $C^1$-neighborhood ${\mathcal{D}}\subset\Diff(M)$ around $\id_M$ such that for
$\varphi\in {\mathcal{D}}$, the equality
$\varphi_*(\pi^{\varpi_{\epsilon}}_{\gamma_{\delta}})=\pi^{\varpi_{\epsilon'}}_{\gamma_{\delta'}}$ implies
${\epsilon}={\epsilon}'$ and ${\delta}={\delta}'$.
\end{enumerate}
\end{theorem}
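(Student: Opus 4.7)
\emph{Part (a).} The 2-form on the right-hand side of (\ref{tranformed}) is closed by construction (a sum of closed forms) and, on the compact region $\{|x|\ge 1/2\}\subset M$, is a small perturbation of $\pi^{-1}$, hence non-degenerate there for small $(\epsilon,\delta)$. Near $Z$, a direct computation shows that the induced cosymplectic pair on $Z$ is $(\eta+\varpi_\epsilon|_Z,\theta+\gamma_\delta)$, which remains cosymplectic for small parameters since $\theta\wedge\eta^{n-1}$ is a volume form. The one-to-one correspondence recalled in the introduction between cosymplectic structures on $Z$ and germs of log symplectic structures along $Z$ then guarantees that the inverted bivector extends smoothly across $Z$ to a log symplectic structure on $M$.

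\emph{Part (b), setup.} My plan is a Moser-type argument. Given $\pi'\in\mathcal{U}$, the singular locus $Z':=(\wedge^n\pi')^{-1}(0)$ is a $C^0$-small perturbation of $Z$ (both are transverse zero sets of sections of $\wedge^{2n}TM$), so a small isotopy $\varphi_0$ moves $Z'$ onto $Z$; replacing $\pi'$ by $(\varphi_0)_*\pi'$, I may assume $Z'=Z$. Writing the inverse in the canonical form (\ref{inverse}),
\[\pi'^{-1}=\alpha'+d\log(\lambda)\wedge p^*(\theta'),\]
the closed forms $\alpha'\in\Omega^2(M)$ and $\theta'\in\Omega^1(Z)$ are $C^0$-close to $\alpha,\theta$. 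Decomposing in the chosen bases, $[\alpha'-\alpha]=\sum_i\epsilon_i[\varpi_i]$ in $H^2(M)$ and $[\theta'-\theta]=\sum_i\delta_i[\gamma_i]$ in $H^1(Z)$, which fixes $\epsilon,\delta$ and provides primitives $\beta\in\Omega^1(M)$, $f\in C^\infty(Z)$ with $\alpha'=\alpha+\varpi_\epsilon+d\beta$ and $\theta'=\theta+\gamma_\delta+df$.

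\emph{Part (b), Moser step.} Interpolate via
\[\sigma_t:=\bigl(\pi^{\varpi_\epsilon}_{\gamma_\delta}\bigr)^{-1}+t\bigl(d\beta+d\log(\lambda)\wedge p^*(df)\bigr),\quad t\in[0,1],\]
a path of closed log symplectic 2-forms joining $(\pi^{\varpi_\epsilon}_{\gamma_\delta})^{-1}$ to $\pi'^{-1}$. Moser's argument requires a time-dependent vector field $X_t$ solving $\iota_{X_t}\sigma_t=-\mu_t$ for a primitive $\mu_t$ of $\dot\sigma_t$, and a natural choice is $\mu_t:=\beta-(p^*f)\,d\log(\lambda)$, which is singular along $Z$. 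The main obstacle is precisely this step: showing that, despite the singularity of $\mu_t$, its contraction with the log symplectic form $\sigma_t$ produces a smooth vector field tangent to $Z$ (a b-vector field in the sense of Melrose), whose flow is a genuine smooth isotopy of $M$. Composing its time-1 map with $\varphi_0$ yields the required diffeomorphism conjugating $\pi'$ to $\pi^{\varpi_\epsilon}_{\gamma_\delta}$.

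\emph{Part (c).} Suppose $\varphi\in\mathcal{D}$ satisfies $\varphi_*(\pi^{\varpi_\epsilon}_{\gamma_\delta})=\pi^{\varpi_{\epsilon'}}_{\gamma_{\delta'}}$. Both sides have singular locus $Z$, so $\varphi(Z)=Z$; shrinking $\mathcal{D}$ if necessary, both $\varphi$ and $\varphi|_Z$ are isotopic to their respective identities through small diffeomorphisms and hence act trivially on de Rham cohomology. Equating the cosymplectic structures induced on $Z$ gives $[\theta+\gamma_\delta]=[\theta+\gamma_{\delta'}]$ in $H^1(Z)$, whence $\delta=\delta'$ since the $[\gamma_i]$ form a basis. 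Applying the decomposition (\ref{inverse}) to both sides and comparing the classes of the $\alpha$-parts in $H^2(M)$ similarly yields $[\varpi_\epsilon]=[\varpi_{\epsilon'}]$ and hence $\epsilon=\epsilon'$.
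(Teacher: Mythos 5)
Your overall strategy is the same as the paper's (straighten the singular locus onto $Z$, run a Moser argument in the b-category, and prove (c) by cohomological invariance under small diffeomorphisms), but there is a genuine gap at precisely the point where the paper does most of its work. In part (b) you assert that, after an isotopy moving $Z'$ onto $Z$, the data $\alpha',\theta'$ in the decomposition of $\pi'^{-1}$ are $C^0$-close to $\alpha,\theta$. This is not automatic: the $C^0$-topology on b-forms (equivalently on $\Gamma(\wedge^2 T_ZM)$) is \emph{not} the subspace topology induced from $\Gamma(\wedge^2 TM)$. Near $Z$ a b-bivector has coefficients of the form $t\,g(t,x)$ in front of $\partial_t\wedge\partial_{x_i}$, and the b-$C^0$-norm measures $g$, not $tg$; so $C^1$-closeness of $\pi'$ to $\pi$ together with smallness of the straightening isotopy does not by itself give b-$C^0$-closeness of the straightened structure. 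Yet this is exactly what you need for your linear path $\sigma_t$ to remain non-degenerate \emph{as a b-form} for all $t\in[0,1]$ (non-degeneracy away from $Z$ is not enough). The paper's Lemma \ref{sing_locus} is devoted to this: it writes the coefficient of $t\,\partial_t\wedge\partial_{x_i}$ of the straightened bivector as $\int_0^1\bigl(\partial_t A_i(st+g(x),x)-\sum_j\partial_t B_{ij}(st+g(x),x)\,\partial_{x_j}g(x)\bigr)ds$ and estimates it, which is where the $C^1$-hypothesis is genuinely consumed. Without an argument of this kind your Moser path may leave the set of log symplectic b-forms, and the proof of (b) does not close.

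Two further points. You flag the b-Moser step as "the main obstacle" but do not resolve it; the resolution is that $\mu_t=\beta-(p^*f)\,d\log\lambda$ is a genuine b-one-form and $\sigma_t$ is invertible as a bundle map $T_ZM\to T^*_ZM$, so $X_t$ is a b-vector field whose flow is an isotopy preserving $Z$ — this is Lemma \ref{Moser} (from Nest--Tsygan), and it should be invoked rather than left open. You also omit the non-orientable case, where the normal bundle of $Z$ need not be trivial and the straightening must be built equivariantly on the orientation double cover. Part (c) is essentially correct but takes a mildly different route: you compare the $H^2(M)$- and $H^1(Z)$-components separately, which implicitly uses the functoriality of the Mazzeo--Melrose decomposition (Corollary \ref{functorial}) to know those components transform under $\varphi^*$ componentwise; the paper instead constructs a $Z$-preserving isotopy (via a metric for which $Z$ is geodesically closed) and applies the reverse Moser trick directly in $H^2_Z(M)$. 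Either works, but the ingredient you are relying on should be named.
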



The natural setting for studying log symplectic structures is the formalism of \textit{b-geometry} developed in \cite{Melrose.1993}. The cohomology of interest in this formalism is the b-cohomology, that can be explicitly computed in terms of the de Rham cohomology.
Theorem \ref{deformations1} states that the cohomology classes $([\alpha],[\theta])$ from (\ref{inverse}) give a parametrization of the space of Poisson structures near $\pi$ up to small diffeomorphisms. In terms of b-cohomology, this says that Poisson structures near a log symplectic manifold $(M,\pi,Z)$ are parameterized by the second b-cohomology group $H^2_Z(M)$. This is in perfect analogy with the situation in symplectic geometry, where deformations of symplectic forms are classified by the second de Rham cohomology group.

For $M$ a compact oriented surface, Theorem \ref{deformations1} follows from the classification in \cite{Radko.2002}: writing $Z$ as a union of circles $Z_1,\ldots, Z_k$ and decomposing $[\theta]=\sum_i [\theta_i]$, with $[\theta_i]\in H^1(Z_i)$, the classes $[\theta_i]$ correspond to the periods of the modular vector field along the curves $Z_i$ and $[\alpha]$ corresponds to the \textit{generalized Liouville volume} of $(M,\pi)$.

Recall that a log symplectic structure $(M,\pi,Z)$ is called \emph{proper} \cite{Gualtieri.2012}, if the foliation on $Z$ is
given by a submersion to $S^1$. Now this is equivalent to the cohomology class $[\theta]$ being a real multiple of an
integral class (see \cite{Tischler}). Since such one-forms are dense in the space of all closed one-forms, Theorem \ref{deformations1} implies that the
proper log symplectic structures form a dense set in the space of all log symplectic structures, a result which appeared also in
\cite{Cavalcanti.2013}.

\vspace{0.2cm}

The structure of the paper is the following: In section 2 we present the language of b-geometry. In particular, we discuss the
functoriality of the Mazzeo-Melrose decomposition of b-cohomology (Corollary \ref{functorial}), which implies that the classes
$[\alpha]$, $[\theta]$ associated to $\pi$ are canonical. In the orientable case, this functoriality result appeared in \cite{Scott}. The b-geometric version of the Moser lemma from \cite{Nest.1996}, which plays a fundamental role in our proof, is stated in Lemma \ref{Moser} in the more general setting of one-,two- and top-forms. In the case of top forms, we obtain a simple proof (Corollary \ref{Nambu}) of the classification of generic multi-vector fields of top degree from \cite{David}. In section 3 we compute the Poisson cohomology of log symplectic structures (Proposition \ref{cohomology}) and relate this to Theorem \ref{deformations1}. In section 4 we begin the proof of Theorem \ref{deformations1} by proving that any Poisson structure close to $\pi$ is Poisson diffeomorphic to a log symplectic structure also close to $\pi$ with the same singular locus. In section 5 we finish the proof of Theorem \ref{deformations1}. In section 6 we present examples which illustrate the phenomena that can occur when deforming log symplectic structures. In the appendix, we include the proof of Lemma \ref{functorial1}.

\medskip

\noindent \textbf{Acknowledgments.} We would like to thank Marius Crainic for his comments on the first drafts of this paper. We are also grateful to David Mart\'inez Torres for insightful discussions, which helped us understand better the language of ``b-geometry''. We also thank the referee for useful comments. The first author was supported by the ERC Starting Grant no. 279729 and the second by the NWO VIDI project ``Poisson Topology'' no. 639.032.712.

\section{The language of b-geometry}\label{b-geom}

A very effective tool to describe and handle log symplectic structures is using an adaptation of the b-geometry of
manifolds with boundary developed by Melrose in \cite{Melrose.1993} (see also \cite{Guillemin.Eva.2012}).

We consider pairs $(M,Z)$, where $M$ is a smooth manifold (without boundary) and $Z$ is a distinguished
codimension-one closed embedded submanifold of $M$. Such a pair is called a \textit{b-manifold}. A \textit{b-map} is a smooth
map $\varphi:(M_1,Z_1)\to (M_2,Z_2)$ between b-manifolds that is transverse to $Z_2$ and satisfies
$\varphi^{-1}(Z_2)=Z_1$.

Associated to a pair $(M,Z)$ there is a natural \textit{b-tangent bundle} denoted by $T_ZM\to M$, whose sections are
vector fields on $M$ that are tangent to $Z$. Let $E\to M$ be a tubular neighborhood of $Z$ in $M$, and denote by
$\xi$ the corresponding  Euler vector field on $E$; i.e.\ the flow of $\xi$ is fiberwise multiplication with $e^t$.
Regarding $\xi$ as a section of $T_ZE$, we see that $\xi$ is nowhere vanishing, and moreover, that
\begin{equation}\label{xizi} \xi_Z:=\xi|_Z\in \Gamma(T_ZM|_Z)\end{equation}
is independent of the tubular neighborhood.

\subsection{B-cohomology}
There is also a notion of \textit{b-de Rham cohomology}, denoted by $H^{\bullet}_Z(M)$, which is computed on the
complex of \textit{b-differential forms} $\Omega^{\bullet}_Z(M):=\Gamma(\wedge^{\bullet}T^*_ZM)$ (i.e.\ on the space of
multi-linear forms on $T_ZM$). The differential is determined by the fact that the restriction map
$\Omega^{\bullet}_Z(M)\to \Omega^{\bullet}(M\backslash Z)$ is a chain map. It is well-known that this complex fits
in a (canonical) short exact sequence of complexes:
\begin{equation}\label{Exact}
0\rmap\Omega^{\bullet}(M)\rmap \Omega^{\bullet}_Z(M)\stackrel{\iota_{\xi_Z}}{\rmap} \Omega^{\bullet-1}(Z)\rmap 0,
\end{equation}
where $\xi_Z$ is defined by (\ref{xizi}). This sequence splits. To see this, fix $E$ a tubular neighborhood of $Z$, and
consider a function $\lambda:M\backslash Z\to (0,\infty)$ with the properties from the Introduction: $\lambda(x):=|x|$,
for $x\in E$ with $|x|\leq 1/2$, and $\lambda\equiv1$ on $M\backslash \{x\in E: |x|<1\}$. We call such a function
\emph{a distance function adapted to} $E$. The one-form $d\log(\lambda)$ extends to a closed one-form on $T_ZM$
which is supported in $E$. A splitting of (\ref{Exact}) commuting with the differentials is given by the map
\begin{equation}\label{splitting}
\sigma: \Omega^{\bullet-1}(Z)\rmap \Omega^{\bullet}_Z(M),\ \ \ \omega\mapsto d\log(\lambda)\wedge p^*(\omega),
\end{equation}
where $p:E\to Z$ is the projection. This implies the Mazzeo-Melrose theorem \cite{Melrose.1993}, i.e.\ we have the
following decomposition for b-cohomology:
\begin{equation}\label{Mazzeo}
H^{\bullet}_Z(M)= H^{\bullet}(M)\oplus \sigma(H^{\bullet-1}(Z))\cong H^{\bullet}(M)\oplus H^{\bullet-1}(Z).
\end{equation}

Note that the image of $1\in H^0(Z)$ under $\sigma$ is
\[\sigma(1)=[d\log(\lambda)].\]
Now, if $\lambda'$ is another distance function (associated to a second tubular neighborhood), it is easy to see that
there is a smooth function $f\in C^{\infty}(M)$ such that $\lambda'=e^f\lambda$. Hence,
\[d\log(\lambda')=d\log(\lambda)+df,\]
and therefore, the class $[d\log(\lambda)]\in H^1_Z(M)$ is independent of the choice of $\lambda$.

This example motivates the results below. The case when $M$ is orientable of corollary \ref{functorial} appeared in \cite{Scott} in the more general setting of
$b^k$-forms. Here orientability is not assumed. The proof of the lemma is presented in the appendix so as not to disrupt the flow of the paper.

\begin{lemma}\label{functorial1}
The elements in $\sigma(H^{k-1}(Z))\subset H^k_Z(M)$ are characterized by the following property:
if $\omega\in \Omega^k_Z(M)$ is a closed b-form, then $[\omega]\in H^k_Z(M)$ belongs to $\sigma(H^{k-1}(Z))$ if and only if for every b-map $i:(N,W)\to (M,Z)$ from a $k$-dimensional compact oriented b-manifold $(N,W)$, the following holds:
\[\int_{N}i^*(\omega)=0,\]
where the integral is the regularized volume defined in \cite{David}.
\end{lemma}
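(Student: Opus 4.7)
The plan is to exploit the Mazzeo--Melrose splitting $H^k_Z(M) = H^k(M) \oplus \sigma(H^{k-1}(Z))$ and to show that the regularized integral, applied to pullbacks along b-maps from compact oriented $k$-dimensional b-manifolds, detects precisely the smooth summand $H^k(M)$. The two directions of the equivalence correspond respectively to the vanishing of the regularized integral on $\sigma$-classes and to the nondegeneracy of smooth integration against $H^k(M;\mathbb{R})$.

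\textbf{Forward direction.} Suppose $[\omega] = \sigma([\beta])$ for some closed $\beta \in \Omega^{k-1}(Z)$, and pick the canonical representative $\omega = d\log(\lambda)\wedge p^*\beta$ (any other representative differs by a b-exact form, whose regularized integral vanishes by the b-Stokes theorem of \cite{David}). For a b-map $i:(N,W)\to (M,Z)$, fix a distance function $\lambda_N$ adapted to a tubular neighborhood of $W$; transversality yields $i^*\lambda = h\cdot\lambda_N$ near $W$ for some smooth positive $h$. Using $d\beta=0$, I rewrite $i^*\omega = d(\log(i^*\lambda)\cdot i^*p^*\beta)$ on $N\setminus W$ and apply Stokes' theorem on the truncated domain $\{\lambda_N\geq\epsilon\}$. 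The boundary contribution decomposes into a $\log\epsilon$-piece and a bounded piece, both of which vanish in the limit $\epsilon\to 0$ because the two ends of the normal bundle of $W$ carry opposite induced orientations and thus cancel.

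\textbf{Backward direction.} Decompose $[\omega] = [\omega_{\mathrm{sm}}] + \sigma([\beta])$ via Mazzeo--Melrose, with $\omega_{\mathrm{sm}}\in \Omega^k(M)$ smooth and closed. It suffices to show $[\omega_{\mathrm{sm}}]=0$ in $H^k(M;\mathbb{R})$. For a smooth map $g:N\to M$ from a compact closed oriented $k$-manifold, perturb $g$ to a b-map $f:(N,W)\to(M,Z)$ with $W=f^{-1}(Z)$; homotopy invariance of the integral of a closed form gives $\int_N g^*\omega_{\mathrm{sm}} = \int_N f^*\omega_{\mathrm{sm}}$. Since $\omega_{\mathrm{sm}}$ is smooth its regularized and ordinary integrals agree, so the forward direction applied to $\sigma([\beta])$ together with the hypothesis on $\omega$ yields $\int_N g^*\omega_{\mathrm{sm}} = \int^{\mathrm{reg}}_N f^*\omega = 0$. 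By Thom's theorem, $H_k(M;\mathbb{R})$ is spanned (up to scalars) by fundamental classes of closed oriented $k$-manifolds, so these vanishing pairings force $[\omega_{\mathrm{sm}}]=0$, whence $[\omega]\in \sigma(H^{k-1}(Z))$.

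\textbf{Main obstacle.} The delicate step is the Stokes computation in the forward direction, especially when $W$ is not two-sided in $N$ (the normal line bundle can be non-orientable even with $N$ orientable, e.g.\ $\mathbb{RP}^2\subset\mathbb{RP}^3$). In that case the ``two sides of $W$'' are not globally well-defined, and the cleanest remedy is to pass to the orientation double cover of the normal bundle where the cancellation of the $\log\epsilon$-term is transparent, and then descend; alternatively, one can argue in normal coordinates and patch with a partition of unity. A secondary technicality is that the primitive $\log(i^*\lambda)\cdot i^*p^*\beta$ is itself not a b-form because of the logarithm, so the Stokes argument must be phrased as an $\epsilon$-truncation extension of the usual b-Stokes theorem rather than a direct application of it.
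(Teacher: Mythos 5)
Your proposal is correct and follows the same overall strategy as the paper: use the Mazzeo--Melrose splitting, kill the $\sigma$-summand by a Stokes argument on the truncated domains $\{\lambda \geq \epsilon\}$, and detect the smooth summand $H^k(M)$ by Thom's theorem applied to cycles made transverse to $Z$. The one genuine difference is in the forward direction, and it is exactly where you locate your ``main obstacle.'' The paper does not introduce an auxiliary distance function $\lambda_N$ on $N$; it observes that $\lambda\circ i$ is itself an adapted distance function (by transversality of the b-map), so the truncation is $N_\epsilon=\{\lambda\circ i\geq\epsilon\}$ and the boundary term is exactly $\log(\epsilon)\int_{W_\epsilon}(p\circ i)^*\mu$ with $W_\epsilon=\{\lambda\circ i=\epsilon\}$. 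It then applies Stokes a \emph{second} time: $W_\epsilon$, with reversed orientation, bounds $E_\epsilon=\{\lambda\circ i\leq\epsilon\}$, so $\int_{W_\epsilon}(p\circ i)^*\mu=-\int_{E_\epsilon}(p\circ i)^*(d\mu)=0$ because $\mu$ is closed. This makes the boundary term vanish identically for small $\epsilon$, not merely in the limit, and it is insensitive to whether $W$ is two-sided in $N$ --- the level set $W_\epsilon$ is always a null-homologous boundary, so your worry about $\mathbb{RP}^2\subset\mathbb{RP}^3$ and the proposed detour through the orientation double cover simply evaporate. Your limit-and-cancellation version can be made to work in the two-sided case, but note that the cancellation of the $\log\epsilon$-piece needs a quantitative rate (the integral over $\{\lambda_N=\epsilon\}$ must be $o(1/\log\epsilon)$, e.g.\ $O(\epsilon)$), which you do not supply; the double-Stokes trick renders all of this unnecessary. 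The backward direction matches the paper essentially verbatim.
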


\begin{corollary}[(see also \cite{Scott})]\label{functorial}
The decomposition (\ref{Mazzeo}) is functorial in the b-category.
\end{corollary}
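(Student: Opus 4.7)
My plan is to deduce Corollary \ref{functorial} as a short consequence of Lemma \ref{functorial1}. The claim is that for any b-map $\varphi:(M_1,Z_1)\to(M_2,Z_2)$, the pullback $\varphi^*$ on b-de Rham cohomology respects the direct sum decomposition (\ref{Mazzeo}), and I would verify this summand by summand.

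First, I would dispose of the summand $H^{\bullet}(M)$. This is automatic from the naturality of the short exact sequence (\ref{Exact}): because $\varphi$ is a smooth map of underlying manifolds, it pulls back smooth forms to smooth forms, so the inclusion $\Omega^{\bullet}(M)\hookrightarrow \Omega^{\bullet}_Z(M)$ is natural and $\varphi^*$ sends $H^{\bullet}(M_2)\subset H^{\bullet}_{Z_2}(M_2)$ into $H^{\bullet}(M_1)\subset H^{\bullet}_{Z_1}(M_1)$.

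The substantive step is showing that $\sigma(H^{\bullet-1}(Z_2))$ is sent into $\sigma(H^{\bullet-1}(Z_1))$. Here I would apply Lemma \ref{functorial1} in both directions. Given $[\omega]\in\sigma(H^{k-1}(Z_2))$, to conclude $\varphi^*[\omega]\in\sigma(H^{k-1}(Z_1))$ it suffices to verify the integral-vanishing condition in $(M_1,Z_1)$: for every b-map $j:(N,W)\to(M_1,Z_1)$ from a $k$-dimensional compact oriented b-manifold,
\[\int_N j^*(\varphi^*\omega) \;=\; \int_N (\varphi\circ j)^*\omega \;=\; 0.\]
The composite $\varphi\circ j$ is itself a b-map --- a quick verification of transversality to $Z_2$ and of $(\varphi\circ j)^{-1}(Z_2)=W$, i.e.\ that b-maps are closed under composition --- so the right-hand integral vanishes by the characterization of $\omega$ via Lemma \ref{functorial1} applied in $(M_2,Z_2)$.

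At the level of this corollary I do not anticipate any real obstacle. The key point is simply that Lemma \ref{functorial1} supplies a characterization of the summand $\sigma(H^{k-1}(Z))$ phrased entirely inside the b-category (pullbacks along b-maps and regularized integrals in the sense of \cite{David}), after which functoriality reduces to the formal observation that b-maps compose. The genuine difficulty is therefore localized in Lemma \ref{functorial1} itself, whose proof is the content to be established in the appendix.
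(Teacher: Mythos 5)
Your proposal is correct and coincides with the paper's own argument: the paper likewise deduces the corollary by applying Lemma \ref{functorial1} to $\varphi^*\omega$ via the composite b-map $\varphi\circ i$, the only implicit step being that b-maps are closed under composition. The extra remark about the summand $H^{\bullet}(M)$ being preserved automatically is fine and left tacit in the paper.
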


\begin{proof}
Let $\varphi:(M_1,Z_1)\to (M_2,Z_2)$ be a b-map. We have to show that the induced map
\[\varphi^*:H^{\bullet}_{Z_2}(M_2)\rmap H^{\bullet}_{Z_1}(M_1)\]
maps $\sigma(H^{\bullet-1}(Z_2))$ into $\sigma(H^{\bullet-1}(Z_1))$, where $\sigma$ is the map from (\ref{splitting}). Let $[\omega]\in \sigma(H^{k-1}(Z_2))$ be represented by a closed b-forms $\omega\in \Omega^k_{Z_2}(M_2)$. Let $i:(N,W)\to (M_1,Z_1)$ be a b-map from a compact oriented $k$-dimensional b-manifolds $(N,W)$. Then $\varphi\circ i:(N,W)\to (M_2,Z_2)$ is also a b-map; therefore, by the lemma,
\[\int_{N}i^*(\varphi^*(\omega))=\int_{N}(\varphi\circ i)^*(\omega)=0.\]
Again, the lemma implies that $[\varphi^*(\omega)]=\varphi^*[\omega]\in \sigma(H^{k-1}(Z_2))$. This finishes the proof.
\end{proof}

\subsection{Log symplectic structures}

The framework of b-geometry allows us to regard log symplectic structures as ``symplectic'' structures on the b-tangent bundle
$T_ZM$. To see this, note first that a log symplectic structure on $M$ with singular locus $Z$ is the same as a
nondegenerate section $\pi\in \Gamma(\wedge^2 T_ZM)$ satisfying $[\pi,\pi]=0$. This is equivalent to having a
nondegenerate b-two-form $\omega\in \Gamma(\wedge^2 T^*_ZM)$ that is closed, $d\omega=0$; the two being
related by $\pi^{-1}=\omega$. Using a tubular neighborhood of $Z$ in $M$ and a distance function $\lambda$ adapted
to $E$, we can decompose
\[\omega=\alpha+d\log(\lambda)\wedge p^*(\theta),\]
where $\alpha$ is a closed two-form on $M$ and $\theta$ is a closed one-form on $Z$. Note that the image of any
closed b-one-form under the isomorphism $\pi^{\sharp}:T^*_ZM\diffto T_ZM$ is a Poisson b-vector field. In particular,
$X:=-\pi^{\sharp}(d\log(\lambda))$ represents the modular class of $\pi$ (see \cite{Guillemin.Eva.2012}) and
$V:=X|_{Z}$ is a Poisson vector field on $(Z,\pi_{Z})$ transverse to the symplectic leaves. As mentioned in the Introduction, the pair $(\pi_Z,V)$
corresponds to the cosymplectic structure $(\eta,\theta)$ on $Z$, where $\eta:=\alpha|_Z$. Note that $\theta$ does not
depend on the tubular neighborhood and on $\lambda$, since it is the image of $\omega$ under the map $\iota_{\xi_Z}$ from (\ref{Exact}). However, $\eta$ does depend, but in a rather mild fashion, as it changes only by exact two-forms of the type
$d(f\theta)$, with $f\in C^{\infty}(Z)$. Likewise, $V$ is determined up to Hamiltonian vector fields.

Now we reformulate the main result in the b-language. If $M$ is compact and $\mu\in
\Omega^2_Z(M)$ is a $C^0$-small closed b-two-form, then  $\omega+\mu$ is still non-degenerate, therefore $\pi^{\mu}:=(\omega+\mu)^{-1}$ is
a small deformation of $\pi$. This remark implies part (a) of Theorem \ref{deformations1}. Part (b) says that every
Poisson structure close to $\pi$ is isomorphic to one of this form, and moreover, whenever $[\mu]=[\mu']\in
H^2_Z(M)$, we have that $\pi^{\mu}$ and $\pi^{\mu'}$ are isomorphic. Conversely, part (c) says that there is an
open neighborhood $\mathcal{D}$ around $\id_M$ in the space of diffeomorphisms of $M$, such that if $[\mu]\neq [\mu']$, then
$\pi^{\mu}$ and $\pi^{\mu'}$ are not related by an element in $\mathcal{D}$. In other words, Poisson structures around $\pi$ are parameterized up to small diffeomorphisms by an open neighborhood in $H^2_Z(M)$ around $[\omega]$; or equivalently, by an open neighborhood in $H^2(M)\oplus H^1(Z)$ around the pair $([\alpha],[\theta])$. Moreover, by Corollary \ref{functorial}, the pair $([\alpha],[\theta])$ is canonically associated to $\pi$.

\subsection{Non-degenerate b-forms}\label{b-Moser}

In the proof of Theorem \ref{deformations1} we use a version of the Moser trick for closed non-degenerate
b-two-forms. As in the classical case, this stability result holds for the following type of forms:

\begin{definition}
A b-form $\zeta\in \Omega^k_Z(M)$ of degree $k$ on $(M,Z)$ is called \emph{nondegenerate} if the following map is
surjective:
\[T_ZM\rmap\bigwedge\nolimits^{\!k-1}T^*_ZM,\ \ V\mapsto \iota_V\zeta.\]
\end{definition}

Comparing dimensions, we see that nondegenerate b-forms can exist only in degrees $1,2$ and $\dim(M)$.
Correspondingly, closed nondegenerate b-forms give rise to the following geometric objects:
\begin{enumerate}
\item[(1)] Let $\vartheta$ be a closed nondegenerate b-one-form. Nondegeneracy is equivalent to $\vartheta$ being nowhere
vanishing. Now $\iota_{\xi_Z}\vartheta$ is a closed 0-form on $Z$, thus on each connected component $Z_i$ of $Z$
it is a constant $c_i\in\mathbb{R}$. Geometrically, $\vartheta$ encodes a codimension one foliation $\mathcal{F}_{\vartheta}$ on $M$ which on
$M\backslash Z$ is given by the kernel of $\vartheta$, it is transverse to the components $Z_i$ for which $c_i=0$, and it contains the components $Z_i$ with $c_i\neq 0$ as leaves. To see that this defines indeed a smooth foliation, let $E_i\to M$ be tubular neighborhoods of $Z_i$, such that
$E_i\cap E_j=\emptyset$, for $i\neq j$, and let $\lambda_i$ be adapted distance functions. We decompose
$\vartheta$ as the locally finite sum
\[\vartheta=\theta+\sum_ic_i d\log(\lambda_i),\]
where $\theta$ is a closed one-form. Around a component with $c_i=0$, we have that $\vartheta=\theta$, and
nondegeneracy implies that $\theta|_{TZ_i}:TZ_i\to \mathbb{R}$ is nowhere vanishing; hence the foliation extends
to $Z_i$ by the kernel of $\theta$, and moreover, it is transverse to $Z_i$. If $Z_i$ is a component with $c_i\neq 0$,
then around $Z_i$ we can write $\vartheta=\theta+c_id\log(\lambda)$. On small open neighborhoods $U_i$ around points in $Z_i$,
we can write $\lambda_i=|t|$, where $t$ is a coordinate function with $Z_i\cap U_i=\{t=0\}$. Then the kernel of
$\vartheta$ is also the kernel of the 1-form
\[t/c_i\vartheta=dt+t/c_i\theta,\]
which shows that the foliation extends smoothly to $Z_i$, with $Z_i$ as a leaf.
\item[(2)] As discussed in the previous section, closed nondegenerate b-two-forms $\omega$ are the same as log symplectic
structure $\pi(=\omega^{-1})$ with singular locus $Z$.
\item[(3)] Let $\mu$ be a b-top-form on $(M^m,Z)$. Then $w:=\mu^{-1}\in
\Gamma(\wedge^{m}TM)$ is a multi-vector field of top degree on $M$, which intersects the zero-section of
$\wedge^{m}TM$ transversally at $Z$. These structures are called generic \emph{Nambu structures} of top
degree and were studied in \cite{David}. In Corollary \ref{Nambu} below, we show that the b-geometric Moser
argument implies the main result from \emph{loc.cit.}
\end{enumerate}

We give now the b-geometric Moser lemma for non-degenerate b-forms. The proof is the same as in the case for symplectic b-two-forms, which appeared first in \cite{Nest.1996}.

\begin{lemma}[(see \cite{Nest.1996})]\label{Moser}
Let $\zeta\in \Omega^k_Z(M)$ be a closed nondegenerate b-k-form on a compact b-manifold $(M,Z)$, where $k\in \{1,2,\dim (M)\}$. If $\zeta'\in
\Omega^k_Z(M)$ is a closed b-$k$-form, such that $(1-t)\zeta+t\zeta'$ is nondegenerate for all $t\in [0,1]$, and
\[[\zeta]=[\zeta']\in H^k_Z(M),\]
then there exists a b-diffeomorphism $\varphi:(M,Z)\diffto (M,Z)$, such that $\varphi^*(\zeta')=\zeta$.
\end{lemma}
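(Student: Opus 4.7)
The plan is to adapt the classical Moser trick to the b-category. Set $\zeta_t := (1-t)\zeta + t\zeta'$, which is a closed nondegenerate b-$k$-form for each $t \in [0,1]$ by hypothesis. Since $[\zeta] = [\zeta'] \in H^k_Z(M)$, choose a primitive $\beta \in \Omega^{k-1}_Z(M)$ with $d\beta = \zeta' - \zeta$. The goal is to produce a time-dependent b-vector field $X_t \in \Gamma(T_Z M)$ whose flow $\varphi_t$ is defined up to $t = 1$ and satisfies $\varphi_t^* \zeta_t = \zeta$; then $\varphi := \varphi_1$ will be the required b-diffeomorphism.

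Following Moser, the desired identity reduces, via $\tfrac{d}{dt}(\varphi_t^* \zeta_t) = \varphi_t^*(\mathcal{L}_{X_t} \zeta_t + \dot\zeta_t)$ and Cartan's formula applied to the closed $\zeta_t$, to the pointwise equation
\[
\iota_{X_t} \zeta_t = -\beta.
\]
The restriction $k \in \{1, 2, \dim(M)\}$ is exactly what makes this equation solvable for $X_t$ as a smooth section of $T_Z M$: nondegeneracy means that the bundle map $T_Z M \to \wedge^{k-1} T^*_Z M$, $V \mapsto \iota_V \zeta_t$, is surjective, and in these three degrees one obtains a solution intrinsically in the b-category. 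For $k = 2$ and $k = \dim(M)$ the map is an isomorphism by dimension count, so $X_t$ is uniquely determined; for $k = 1$ nondegeneracy means $\zeta_t$ is nowhere vanishing as a b-one-form, and one fixes a b-metric $g$ on $T_Z M$ and takes $X_t := -\beta\, \zeta_t^{\sharp} / g(\zeta_t, \zeta_t)$, where $\zeta_t^{\sharp}$ is the $g$-dual b-vector field of $\zeta_t$.

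Because $X_t$ is a section of $T_Z M$, it is tangent to $Z$, and hence its flow $\varphi_t$ (wherever defined) is a b-diffeomorphism preserving $Z$. Compactness of $M$ guarantees that the flow exists for all $t \in [0,1]$, and by construction
\[
\tfrac{d}{dt}(\varphi_t^* \zeta_t) = \varphi_t^*\bigl(d\iota_{X_t}\zeta_t + \dot\zeta_t\bigr) = \varphi_t^*(-d\beta + d\beta) = 0.
\]
Therefore $\varphi_1^* \zeta' = \varphi_0^* \zeta = \zeta$, and $\varphi := \varphi_1$ works.

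The main conceptual obstacle, which the b-formalism handles automatically, is that when $\zeta_t$ and $\beta$ are read as ordinary forms on $M\setminus Z$ the vector field $X_t$ would in general blow up along $Z$; the point is that nondegeneracy is phrased on $T_ZM$ and $\beta$ is a b-form, so the solution $X_t$ lives in $\Gamma(T_ZM)$ from the outset and its flow extends smoothly across $Z$. A small technical check — that the degree restriction $k\in\{1,2,\dim(M)\}$ is exactly where the dimension of $T_ZM$ meets the rank of $\wedge^{k-1}T^*_ZM$ so that surjectivity yields a smooth lift — is the only place where the argument departs from the symplectic case of \cite{Nest.1996}.
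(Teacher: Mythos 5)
Your proof is correct and is exactly the argument the paper has in mind: the paper does not spell out a proof but defers to the classical Moser trick of \cite{Nest.1996} carried out in the b-category, which is what you do, including the two points that actually need checking (solvability of $\iota_{X_t}\zeta_t=-\beta$ by a smooth section of $T_ZM$ in each of the three degrees, and completeness of the flow of the b-vector field on the compact manifold). No gaps.
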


As a consequence of this result, we obtain the following:
\begin{corollary}\label{Nambu}
Let $\mu,\mu'\in \Omega_Z^{\textrm{top}}(M)$ be nowhere vanishing b-top-forms on a compact b-manifold $(M,Z)$.
If $[\mu]=[\mu']\in H^{\textrm{top}}_Z(M)$, then there exists a b-diffeomorphism $\varphi$ of $(M,Z)$ such that
$\varphi^*(\mu')=\mu$.
\end{corollary}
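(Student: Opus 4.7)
The plan is to apply the b-geometric Moser Lemma (Lemma \ref{Moser}) directly, with $\zeta := \mu$ and $\zeta' := \mu'$ in top degree. Both forms are closed and lie in the same b-cohomology class by hypothesis, so the only condition that needs checking is that the straight-line path $\mu_t := (1-t)\mu + t\mu'$ is nondegenerate, i.e.\ nowhere vanishing, for every $t \in [0,1]$. Since $\wedge^{\dim M} T^*_Z M$ is a line bundle on $M$ and $\mu$ is a nowhere vanishing section of it, one may write $\mu' = g\mu$ for a unique smooth nowhere zero function $g \in C^{\infty}(M)$. Then $\mu_t = \bigl((1-t) + tg\bigr)\mu$, so nondegeneracy along the entire segment reduces to the single pointwise condition $g > 0$ on $M$.

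To establish $g > 0$, I would exploit that $\iota_{\xi_Z}$ from the short exact sequence (\ref{Exact}) is a chain map: the identity $[\mu] = [\mu']$ in $H^{\dim M}_Z(M)$ descends to
\[
\bigl[\iota_{\xi_Z}\mu|_Z\bigr] = \bigl[\iota_{\xi_Z}\mu'|_Z\bigr] \in H^{\dim Z}(Z).
\]
Nondegeneracy of $\mu$ and $\mu'$ forces both $\iota_{\xi_Z}\mu|_Z$ and $\iota_{\xi_Z}\mu'|_Z$ to be nowhere vanishing top-forms on $Z$, hence volume forms orienting each connected component $Z_0 \subseteq Z$. But two volume forms on a closed orientable manifold with equal de Rham class must induce the same orientation, as otherwise their signed integrals over $Z_0$ would differ in sign. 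A short local computation in b-coordinates $(x, y)$ near $Z$, with $\xi_Z = x\partial_x$ and $\mu = F(x,y)\, \tfrac{dx}{x} \wedge dy^{I}$, shows that $g|_Z = F'(0,y)/F(0,y)$, so this orientation matching is exactly the pointwise inequality $g|_Z > 0$. By continuity $g > 0$ on a tubular neighborhood of $Z$; since $g$ is nowhere zero it has constant sign on each connected component of $M$, hence $g > 0$ globally. Connected components of $M$ that happen to be disjoint from $Z$ are handled by exactly the same orientation argument applied to ordinary de Rham classes on a closed manifold.

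The main obstacle is this orientation step: extracting the pointwise positivity of $g$ from the identity of b-cohomology classes. Once it is in hand, Lemma \ref{Moser} produces with no further work the b-diffeomorphism $\varphi$ of $(M,Z)$ with $\varphi^{*}(\mu') = \mu$, completing the proof.
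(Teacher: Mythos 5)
Your proposal is correct and follows essentially the same route as the paper: write $\mu'=f\mu$, deduce $f|_Z>0$ by comparing the volume forms $\iota_{\xi_Z}\mu|_Z$ and $\iota_{\xi_Z}\mu'|_Z$ (which lie in the same de Rham class on the compact oriented $Z$), handle components disjoint from $Z$ by the analogous integration argument on $M$ itself, conclude $f>0$ everywhere, and invoke Lemma \ref{Moser} on the now-nondegenerate linear path. The only cosmetic point is that your notation $F'(0,y)$ should be read as the coefficient of $\mu'$ rather than a derivative; the argument itself matches the paper's.
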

\begin{proof}
Since $\wedge^{\mathrm{top}}T^*_ZM$ is a rank-one bundle, we can write $\mu'=f\mu$, for some nowhere
vanishing function $f\in C^{\infty}(M)$. Let us show that $f>0$. If $Z=\emptyset$, it follows that $M$ is orientable,
therefore the claim follows by integrating $\mu$ and $\mu'$ on $M$ with respect to the same orientation. If
$Z\neq\emptyset$, we have that $\iota_{\xi_Z}\mu$ and $\iota_{\xi_Z}\mu'$ are volume forms on $Z$ in the same
cohomology class, thus we can apply the same argument as before to conclude that $f|_Z>0$. Hence, $f>0$
everywhere. This implies that $(1-t)\mu+t\mu'=((1-t)+tf)\mu$ is nowhere vanishing for all $t\in [0,1]$ and the result
follows from the Moser Lemma.
\end{proof}

Using the decomposition from Corollary \ref{functorial}
\[H^{\textrm{top}}_Z(M)\cong H^{\textrm{top}}(M)\oplus H^{\textrm{top}}(Z),\]
we see that Corollary 1 extends the classification of generic Nambu structures of top degree from \cite{David} to the
case of non-orientable manifolds $M$. In the orientable case, fixing orientations on $M$ and on the components of $Z$,
we have that a generic Nambu structure $w$ of top degree with singular locus $Z$ is determined, up to orientation
preserving diffeomorphisms, by the regularized volume of $\mu:=w^{-1}$ and by the volumes of the components $Z_i$
of $Z$ computed with the aid of $\iota_{\xi_Z}\omega|_{Z_i}$. In the non-oriented case, it is determined entirely by the
volumes of the components.

\section{Poisson cohomology of log symplectic manifolds}\label{coho}

The Poisson cohomology $H^{\bullet}_{\pi}(M)$ of a Poisson manifold $(M,\pi)$ is computed by the complex
$\mathfrak{X}^{\bullet}(M):=\Gamma(\wedge^{\bullet}TM)$ of multi-vector fields on $M$ endowed with the
differential $d_{\pi}:=[\pi,\cdot]$, where $[\cdot,\cdot]$ denotes the Schouten bracket. The second Poisson cohomology
group $H^2_{\pi}(M)$ has the heuristical interpretation of being the ``tangent space'' at $\pi$ of the moduli-space of
all Poisson structures on $M$. In this section we show that for log symplectic structures this interpretation is in fact accurate.
By Theorem \ref{deformations1}, deformations of a compact log symplectic manifold $(M,\pi)$ with singular locus $Z$ are parameterized by $H^2_Z(M)$. At the cohomological level, this is expressed as follows:

\begin{proposition}\label{cohomology}
Let $(M,\pi)$ be a log symplectic manifold and $Z$ its singular locus. The Poisson cohomology of $(M,\pi)$ is
isomorphic to the b-cohomology of $(M,Z)$:
\begin{equation*}
H_{\pi}^{\bullet}(M) \cong H_Z^{\bullet}(M).
\end{equation*}
\end{proposition}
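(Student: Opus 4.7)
The natural starting point is the isomorphism $\pi^{\sharp}: T^*_ZM \diffto T_ZM$ coming from nondegeneracy of $\pi$ as a b-bivector. Extending to exterior powers yields a graded isomorphism on sections
\[
\Phi: \Omega^{\bullet}_Z(M) \diffto \mathfrak{X}^{\bullet}_Z(M) := \Gamma(\wedge^{\bullet} T_ZM).
\]
The plan is to verify that $\Phi$ intertwines the b-de Rham differential with the Poisson differential $d_{\pi}=[\pi,\cdot]$ restricted to $\mathfrak{X}^{\bullet}_Z(M)$. This restriction is well-defined because $T_ZM$ is a Lie algebroid (so its sections are closed under the Lie bracket of vector fields) and $\pi \in \Gamma(\wedge^2 T_ZM)$, hence $[\pi,\cdot]$ preserves b-multivector fields. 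The intertwining is then a formal computation identical to the standard proof that $H^{\bullet}_{\pi}(M) \cong H^{\bullet}_{dR}(M)$ for symplectic $\pi$, carried out in the b-category.

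This reduces the proposition to showing that the inclusion
\[
\iota: (\mathfrak{X}^{\bullet}_Z(M), d_{\pi}) \hookrightarrow (\mathfrak{X}^{\bullet}(M), d_{\pi}),
\]
induced by the anchor of $T_ZM$, is a quasi-isomorphism. The map is a chain map since the anchor preserves Lie brackets, and it is already an isomorphism over $M\setminus Z$ (where the b-tangent bundle agrees with the tangent bundle). Thus the issue concentrates near $Z$.

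This last step is where I expect the main obstacle to lie. I would tackle it via the local normal form for log symplectic structures around $Z$: in a tubular neighborhood $E$ with transverse coordinate $t$, one may write $\pi = t\partial_t \wedge V + \pi_Z$, where $(\pi_Z, V)$ is the cosymplectic data. In such coordinates ordinary multivector fields are freely generated over b-multivector fields by wedges containing a ``free'' $\partial_t$ (one not coupled to $t$), and a direct computation of $[\pi,\cdot]$ on this complement should produce a contracting homotopy, showing the quotient complex $\mathfrak{X}^{\bullet}(E)/\mathfrak{X}^{\bullet}_Z(E)$ is acyclic. Combining this local vanishing with the symplectic identification on $M\setminus Z$ through a Mayer--Vietoris sequence for the cover $\{M\setminus Z, E\}$ — on the overlap $E\setminus Z$ both complexes compute de Rham cohomology — then yields the desired global isomorphism $H^{\bullet}_{\pi}(M) \cong H^{\bullet}_Z(M)$.
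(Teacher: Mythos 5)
Your first two steps coincide with the paper's: the isomorphism of complexes $\wedge^{\bullet}\pi^{\sharp}:(\Omega^{\bullet}_Z(M),d)\to(\mathfrak{X}^{\bullet}_Z(M),d_{\pi})$ (quoted from Guillemin--Miranda--Pires) reduces the proposition to showing that the inclusion $\mathfrak{X}^{\bullet}_Z(M)\hookrightarrow\mathfrak{X}^{\bullet}(M)$ is a quasi-isomorphism, which is exactly the paper's Lemma~\ref{A_acyclic}. You diverge on how to prove that lemma. The paper does it in one global stroke: it defines $h(w):=\iota_{p^*(\theta)}(\chi w)$, where $\theta=\iota_{\xi_Z}\omega|_Z$ is the canonical one-form defining the foliation on $Z$ and $\chi$ is a cutoff supported in a tubular neighborhood; the Poisson--Cartan formula $\iota_{p^*(\theta)}\circ d_{\pi}+d_{\pi}\circ\iota_{p^*(\theta)}=L_{\pi^{\sharp}(p^*(\theta))}$, together with the fact that $\pi^{\sharp}(p^*(\theta))$ is a b-vector field restricting to the Euler section $\xi_Z$, yields $w+d_{\pi}h(w)+h(d_{\pi}w)\in\mathfrak{X}^{\bullet}_Z(M)$, so $h$ is a homotopy showing the quotient complex is acyclic. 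Your route (normal form $\pi=t\partial_t\wedge V+\pi_Z$ on $E$, contract the quotient there, then Mayer--Vietoris over $\{M\setminus Z,E\}$) is viable and would lead you to essentially the same operator: the contracting homotopy you are looking for on the ``free $\partial_t$'' complement is precisely contraction with $p^*(\theta)$, since on the quotient the induced differential satisfies $\mathrm{id}+L_{\xi}=0$ for any b-vector field $\xi$ extending $\xi_Z$. Two caveats, though. First, this computation is the entire content of the lemma, and you have only asserted that it ``should produce a contracting homotopy''; as written the proof has a promissory note exactly at its crux. Second, a global transverse coordinate $t$ on a tubular neighborhood presupposes that $\nu_Z\cong\wedge^{2n}TM|_Z$ is trivial, which fails for non-orientable $M$ (cf.\ the $\mathbb{P}^2$ example in Section~\ref{exam}); you would need to refine the cover of $E$ or argue sheaf-theoretically, whereas the paper's homotopy is built from the intrinsic data $\xi_Z$ and $\theta$ and requires no orientability. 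What your approach makes explicit is that the quotient $\mathfrak{X}^{\bullet}(M)/\mathfrak{X}^{\bullet}_Z(M)$ is acyclic for reasons local to $Z$ --- a fact the paper also records in the remark following the proof, identifying the quotient with the $\nu_Z$-valued Poisson complex of $(Z,\pi_Z)$.
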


The proof uses the space $\mathfrak{X}^{\bullet}_Z(M)$
of multi-vector fields on $M$ tangent to $Z$, which can also be identified with the space of b-multi-vector fields on
$(M,Z)$, i.e.\ $\mathfrak{X}^{\bullet}_Z(M)=\Gamma(\wedge^{\bullet} T_ZM)$. Note that, since $Z$ is a Poisson
submanifold, $(\mathfrak{X}^{\bullet}_Z(M),d_{\pi})$ is a subcomplex of $(\mathfrak{X}^{\bullet}(M),d_{\pi})$. The
resulting cohomology, denoted by
$H^{\bullet}_{\pi,Z}(M)$, is called the ``b-Poisson-cohomology'' in \cite{Guillemin.Eva.2012}. In \textit{loc.cit.}\ it is shown that the log symplectic structure $\pi$ gives an isomorphism of complexes
\[\wedge^{\bullet}\pi^{\sharp}:(\Omega^{\bullet}_Z(M),d) \diffto (\mathfrak{X}^{\bullet}_Z(M),d_{\pi}),\]
and therefore $H^{\bullet}_Z(M)\cong H^{\bullet}_{\pi,Z}(M)$.
Hence the following lemma implies Proposition \ref{cohomology}.

\begin{lemma}\label{A_acyclic}
The inclusion $\mathfrak{X}^{\bullet}_Z(M)\subset \mathfrak{X}^{\bullet}(M)$ induces an isomorphism in
cohomology; i.e. the Poisson cohomology is isomorphic to the b-Poisson cohomology:
\[H^{\bullet}_{\pi}(M)\cong H^{\bullet}_{\pi,Z}(M).\]
\end{lemma}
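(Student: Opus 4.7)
The approach is to work with the short exact sequence of complexes
\[0 \rmap \mathfrak{X}^\bullet_Z(M) \rmap \mathfrak{X}^\bullet(M) \rmap Q^\bullet \rmap 0,\]
where $Q^\bullet$ is the quotient, and show that $Q^\bullet$ is acyclic; the associated long exact sequence in cohomology then yields the claimed isomorphism $H^\bullet_\pi(M) \cong H^\bullet_{\pi,Z}(M)$.

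To identify $Q^\bullet$, I use the bundle exact sequence $0 \to \wedge^k TZ \to \wedge^k TM|_Z \to \nu \otimes \wedge^{k-1} TZ \to 0$, where $\nu := TM|_Z / TZ$ is the normal line bundle. Restricting sections to $Z$ and projecting shows that $Q^k \cong \Gamma(Z,\nu \otimes \wedge^{k-1} TZ)$; locally, trivializing $\nu$ by a transverse vector field $\partial_t$, this becomes $\mathfrak{X}^{k-1}(Z)$ via $[\partial_t \wedge Y] \leftrightarrow Y$. Since $d_\pi$ is first order, the induced operator $d^Q$ depends only on the 1-jet of $\pi$ along $Z$, which is encoded by the cosymplectic data $(\pi_Z,V)$: any $O(t^2)$ perturbation of $\pi$ brackets to something in $\mathfrak{X}^{k+1}_Z(M)$. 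Modelling $\pi$ near $Z$ by $t\partial_t \wedge V + \pi_Z$ (whose 1-jet agrees with that of $\pi$), a direct Schouten bracket computation gives
\[d^Q Y = d_{\pi_Z} Y + V \wedge Y, \qquad Y \in \mathfrak{X}^{k-1}(Z).\]

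For acyclicity, I use the cosymplectic splitting $TZ = T\mathcal{F} \oplus \langle V \rangle$, where $\mathcal{F}$ is the symplectic foliation of $\pi_Z$. Setting $A^j := \Gamma(\wedge^j T\mathcal{F})$, there is a decomposition $\mathfrak{X}^{k-1}(Z) = A^{k-1} \oplus V \wedge A^{k-2}$. Since $\pi_Z \in \Gamma(\wedge^2 T\mathcal{F})$, Frobenius gives that $d_{\pi_Z}$ preserves $A^\bullet$, and the fact that $V$ is Poisson for $\pi_Z$ gives $[\pi_Z,V]=0$; together these imply the matrix form
\[d^Q(Y_A,Y_B) = \bigl(d_{\pi_Z} Y_A,\ Y_A - d_{\pi_Z} Y_B\bigr),\]
where $(Y_A,Y_B) \in A^{k-1} \oplus A^{k-2}$ (the second factor identified with $V \wedge A^{k-2}$ via $V\wedge$). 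This exhibits $Q^\bullet$ as a mapping cone of the identity on $(A^\bullet, d_{\pi_Z})$ up to shift, and the explicit homotopy $h(Y_A,Y_B) := (Y_B,0)$ satisfies $h d^Q + d^Q h = \id$, whence $H^\bullet(Q^\bullet) = 0$.

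The main obstacle is the Schouten bracket computation yielding the formula for $d^Q$: one must check that the 1-jet of $\pi$ can be put in the claimed form using a tubular neighborhood, and verify the Frobenius-type invariance of $A^\bullet$ under $d_{\pi_Z}$ together with the Poisson identity $[\pi_Z,V]=0$. Once these are in place, the contracting homotopy makes acyclicity of $Q^\bullet$ immediate, and the lemma follows.
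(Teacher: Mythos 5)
Your strategy is sound and genuinely different in packaging from the paper's: the paper constructs a single global homotopy $h(w)=\iota_{p^*(\theta)}(\chi w)$ directly on $\mathfrak{X}^{\bullet}(M)$, uses the Poisson--Cartan formula to reduce everything to the statement that $w+[\xi,w]\in\mathfrak{X}^{\bullet}_Z(M)$ for any b-vector field $\xi$ extending $\xi_Z$, and never explicitly identifies the quotient complex. You instead identify $Q^{\bullet}\cong\Gamma(\nu\otimes\wedge^{\bullet-1}TZ)$, compute the induced differential, and contract it via the mapping-cone structure. These are really two views of the same operator: your homotopy $(Y_A,Y_B)\mapsto(Y_B,0)$ is exactly what $\iota_{\theta}$ does on $A^{\bullet}\oplus V\wedge A^{\bullet}$, so the paper's $h$ descends to yours on $Q^{\bullet}$. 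Your version makes visible the identification of $Q^{\bullet}$ with the complex computing $H^{\bullet}_{\pi_Z}(Z;\nu_Z)$ (which the paper only states in a remark); the paper's version is shorter and sidesteps the trivialization issues below.

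There are two points you need to repair. First, and most substantively: the claim that the $1$-jet of $\pi$ along $Z$ agrees with that of the model $t\partial_t\wedge V+\pi_Z$ is false in general. Writing $\pi=t\partial_t\wedge\widetilde{V}+\widetilde{\pi}_Z$ in a tubular neighborhood, one only gets $\widetilde{V}=V+O(t)$ and $\widetilde{\pi}_Z=\pi_Z+O(t)$, so the difference from the model is $O(t)$ in the tangential component, not $O(t^2)$. A concrete example is $\pi=t\partial_t\wedge\partial_x+\partial_y\wedge\partial_z+t\,\partial_x\wedge\partial_y$ on $\mathbb{R}^4$, which is Poisson and log symplectic but whose tangential part has a nontrivial linear term in $t$. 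Killing this term for a general $\pi$ is precisely the content of the Guillemin--Miranda--Pires normal form theorem, which you do not invoke. The fix is cheap but must be made: the difference $\pi-(t\partial_t\wedge V+\pi_Z)$ lies in $t\cdot\mathfrak{X}^2_Z(M)$, and for $\sigma\in\mathfrak{X}^2_Z(M)$ one checks $[t\sigma,w]=t[\sigma,w]\pm\sigma\wedge\iota_{dt}w\in\mathfrak{X}^{\bullet}_Z(M)$ for every $w$ (since $t\cdot\mathfrak{X}^{\bullet}(M)\subset\mathfrak{X}^{\bullet}_Z(M)$ near $Z$ and $\iota_{dt}w$ is tangential); this weaker statement is all you need to replace $\pi$ by the model when computing $d^Q$.

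Second, the paper explicitly allows non-orientable $M$, so $\nu$ need not be trivial and there is no global transverse $\partial_t$; moreover your local formula for $d^Q$ is not trivialization-independent — under $t\mapsto ft$ the vector field $V$ changes by a Hamiltonian vector field of $\pi_Z$ (consistent with the paper's remark that $V$ is only determined up to Hamiltonian vector fields), so $d^Q$ acquires a connection term on the $\nu$-factor. Your argument survives because the homotopy $h$ is $C^{\infty}(Z)$-linear in the $\nu$-slot and the decomposition $Y=Y_A+V\wedge Y_B$ transforms compatibly (Hamiltonian vector fields lie in $A^1$), so $h\,d^Q+d^Q h-\mathrm{id}$ is a globally defined operator that vanishes locally; but this globalization step is not automatic and should be stated.
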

\begin{proof}
We will construct linear maps $h: \mathfrak{X}^{\bullet}(M)\to  \mathfrak{X}^{\bullet-1}(M)$ that satisfy
\begin{equation}\label{homot}
\zeta(w):=w+d_{\pi}\circ h(w)+h\circ d_{\pi}(w)\in  \mathfrak{X}^{\bullet}_Z(M),\textrm{ for all }w\in  \mathfrak{X}^{\bullet}(M).
\end{equation}
This will imply the conclusion, since $\zeta$ induces a map in cohomology $\zeta:H^{\bullet}_{\pi}(M)\to
H^{\bullet}_{\pi,Z}(M)$, which is the inverse of the map induced by the inclusion.

Let $E\to M$ be a tubular neighborhood of $Z$ in $M$, with projection $p:E\to Z$. Let $\chi$ be a smooth function
supported in $E$, such that $\chi=1$ in a neighborhood of $Z$. Denote $\omega:=\pi^{-1}\in \Omega^2_Z(M)$ and
consider the canonical 1-form $\theta\in\Omega^1(Z)$ defining the foliation on $Z$. Now, define the operators $h$ by
\[h: \mathfrak{X}^{\bullet}(M)\rmap  \mathfrak{X}^{\bullet-1}(M), \ \ h(w):=\iota_{p^*(\theta)}\left(\chi w\right).\]
It suffices to check (\ref{homot}) locally around $Z$, where we have that $\chi=1$. First, note that, since
$p^*(\theta)$ is closed, a Poisson version of the Cartan formula holds:
\[\iota_{p^*(\theta)}\circ d_{\pi}+d_{\pi}\circ\iota_{p^*(\theta)}=L_{\pi^{\sharp}(p^*(\theta))}.\]
Thus, for (\ref{homot}) it suffices to check that $w+L_{\pi^{\sharp}(p^*(\theta))}w\in
\mathfrak{X}^{\bullet}_Z(M)$. Since $\pi$ is tangent to $Z$, $\xi:=\pi^{\sharp}(p^*(\theta))$ is a b-vector field.
Recall that $\theta=\omega^{\sharp}|_{Z}(\xi_Z)$, where $\xi_Z$ is the canonical section of $T_ZM|_Z$. Since
\[\omega^{\sharp}(\xi)|_{Z}=p^*(\theta)|_{Z}=\theta,\]
and $\omega$ is invertible, we have that $\xi|_{Z}=\xi_Z$. Now, an easy local computation shows that every b-vector
field $\xi$ extending $\xi_Z$ satisfies $w+[\xi,w]\in \mathfrak{X}^{\bullet}_Z(M)$. \end{proof}

\begin{remark}
Analyzing the proof of Lemma \ref{A_acyclic}, we see that the quotient complex
$\mathfrak{X}^{\bullet}(M)/\mathfrak{X}_Z^{\bullet}(M)$ is acyclic. In fact, one can show that this complex is
isomorphic to the complex computing the Poisson cohomology of the Poisson manifold $(Z,\pi_Z)$ with coefficients in
the normal bundle $\nu_Z$ of $Z$, endowed with the canonical representation (see \cite{Thesis}). Thus, we conclude
that $H^{\bullet}_{\pi_Z}(Z;\nu_Z)=0$. This is surprising, since the cohomology with trivial coefficients
$H^{\bullet}_{\pi_Z}(Z)$ never vanishes, and moreover, this cohomology is infinite dimensional when the
cosymplectic structure is proper, i.e.\ when the foliation is given by the fibers of a submersion to $S^1$.
\end{remark}

\section{Stability of the singular locus}
In this section we begin the proof of Theorem \ref{deformations1}. We prove that a Poisson structure $C^1$-close to a log symplectic structure $\pi$ is also log symplectic and moreover, its singular locus is diffeomorphic to that of $\pi$.

This is related to the problem of stability of Poisson submanifolds, which studies persistence of Poisson submanifolds under deformations of the Poisson bivector, and was treated in the case of symplectic leaves in \cite{Crainic.Stability}. Heuristically, the infinitesimal condition for stability of the Poisson submanifold $N$ of $(M,\pi)$ is that the quotient complex $\mathfrak{X}^{\bullet}(M)/\mathfrak{X}_N^{\bullet}(M)$ is acyclic in degree two, where $\mathfrak{X}^{\bullet}_N(M)$ denotes as before the space of multi-vector fields tangent to $N$. In the case of $N$ being a compact symplectic leaf, it is proved in \cite{Crainic.Stability} that the condition
\[H^2\left( \mathfrak{X}^{\bullet}(M) / \mathfrak{X}^{\bullet}_N(M)\right)=0 \]
ensures the stability of $N$ and moreover that the space of Poisson submanifolds nearby $N$ is parameterized by $H^1\left( \mathfrak{X}^{\bullet}(M) / \mathfrak{X}^{\bullet}_N(M)\right)$.

For a log symplectic structure $(M,\pi)$, we saw in the previous section that the complex $\mathfrak{X}^{\bullet}(M)/\mathfrak{X}_Z^{\bullet}(M)$ is acyclic in all degrees. This observation is the infinitesimal counterpart of the next Lemma.

\subsubsection*{Remark on the topologies.}
We use the $C^1$-topologies on $\Diff(M)$ and on $\Gamma(\wedge^2 TM)$, and the $C^0$-topology on
$\Gamma(\wedge^{2}T_ZM)$. Similarly, we endow the space $\Poiss(M)$, of all Poisson structures on $M$, with the
induced $C^1$-topology, and the space $\Log(M,Z)\subset \Gamma(\wedge^2 T_ZM)$, of all log symplectic
structures on $M$ with singular locus $Z$, with the induced $C^0$-topology.

The reader should be warned that the $C^0$-topology on $\Gamma(\wedge^{2}T_ZM)$ is not the subspace
$C^0$-topology induced from $\Gamma(\wedge^{2}TM)$. To see this, let $U$ be an open neighborhood around a point in $Z$ with
coordinates $(t,x)\in \mathbb{R}\times\mathbb{R}^{2n-1}$ such that $Z\cap U=\{t=0\}$. A bivector of the form
$f(t,x)\frac{\partial}{\partial t}\wedge \frac{\partial}{\partial x_i}$ is in $\Gamma(\wedge^2T_ZM)$ if and only if
$f(t,x)=tg(t,x)$ for some smooth function $g(t,x)$. Now the $C^0$-norm on $\Gamma(\wedge^2TM)$ computes the
supremum of $f(t,x)$, whereas the $C^0$-norm on $\Gamma(\wedge^2T_ZM)$ computes the supremum of $g(t,x)$.
This makes the following lemma more subtle than one would expect.

\begin{lemma}\label{sing_locus} Given a compact log symplectic manifold $(M,\pi)$ with singular locus $Z$, any $C^1$-close Poisson structure is log symplectic and has singular locus diffeomorphic to $Z$. More precisely, there is a $C^1$-open $\mathcal{V}\subset \Poiss(M)$ around $\pi$ and there is a map\break
$\Phi:\mathcal{V}\to \Diff(M)$ satisfying $\Phi(\pi)=\id_M$ and $\Phi(\pi')_{*}(\pi')\in \Log(M,Z)$ for every $\pi'\in
\mathcal{V}$. Moreover, the map
\[\Psi:\mathcal{V}\rmap \Log(M,Z), \qquad \Psi(\pi'):=\Phi(\pi')_{*}(\pi')\]
is continuous for the $C^1$-topology on $\mathcal{V}$ and the $C^0$-topology on $\Log(M,Z)$.
\end{lemma}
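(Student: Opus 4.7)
My strategy is to locate the singular locus $Z'$ of a nearby Poisson structure $\pi'$ as a graph over $Z$ in a fixed tubular neighborhood, to define $\Phi(\pi')$ as a cutoff fiberwise translation that pushes $Z'$ back onto $Z$, and then to verify the subtle continuity claim relating the $C^1$-topology on $\Poiss(M)$ to the strictly finer $C^0$-topology on $\Gamma(\wedge^2 T_ZM)$. The first two steps are a routine application of tubular neighborhoods and the implicit function theorem; the last is the main obstacle, precisely because the target topology is strictly finer than the one induced from $\Gamma(\wedge^2 TM)$, as the remark preceding the lemma warns.

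\textbf{Detection and construction of $\Phi$.} Since $\wedge^n\pi$ is transverse to the zero section of $\wedge^{2n}TM$ by hypothesis, and transversality is a $C^1$-open condition, every $\pi'$ in some $C^1$-neighborhood $\mathcal{V}_0$ of $\pi$ is log symplectic with smooth singular locus $Z':=(\wedge^n\pi')^{-1}(0)$. Fix once and for all a tubular neighborhood $p:E\to Z$, viewed as an open disk bundle in the normal line bundle of $Z$. For $\pi'$ in a smaller neighborhood $\mathcal{V}\subset\mathcal{V}_0$, applying the implicit function theorem fiberwise to $\wedge^n\pi'$ produces a unique smooth section $\phi_{\pi'}:Z\to E$ with $Z'=\phi_{\pi'}(Z)$, satisfying $\phi_\pi=0$ and with $|\phi_{\pi'}|_{C^1}$ bounded by $|\pi'-\pi|_{C^1}$. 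Choose a smooth function $\chi$ on $M$, compactly supported in $E$ and equal to $1$ on a smaller tubular neighborhood of $Z$; using the vector bundle structure on $E$, define
\[\Phi(\pi')(e):=e-\chi(e)\,\phi_{\pi'}(p(e))\quad\text{for}\ e\in E,\qquad \Phi(\pi'):=\id_M\ \text{outside}\ E.\]
Shrinking $\mathcal{V}$ once more, $\Phi(\pi')$ is a diffeomorphism $C^1$-close to $\id_M$, it satisfies $\Phi(\pi)=\id_M$, and where $\chi\equiv 1$ it sends $Z'$ to $Z$. Therefore $\Psi(\pi'):=\Phi(\pi')_*(\pi')$ is a log symplectic structure with singular locus exactly $Z$, i.e.\ $\Psi(\pi')\in \Log(M,Z)$.

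\textbf{Continuity in the stated topologies.} By the implicit function theorem, $\pi'\mapsto\phi_{\pi'}$ is continuous from the $C^1$-topology on $\mathcal{V}$ to the $C^1$-topology on smooth sections of $E$; consequently $\pi'\mapsto\Phi(\pi')$ is $C^1$-continuous into $\Diff(M)$, and $\pi'\mapsto\Psi(\pi')$ is $C^1$-continuous into $\Gamma(\wedge^2 TM)$. What is required is the strictly stronger $C^0$-continuity into $\Gamma(\wedge^2 T_ZM)$: in local coordinates $(t,x)$ near $Z=\{t=0\}$, a section of $\wedge^2 T_ZM$ has ``transverse'' components of the form $f(t,x)\,\partial_t\wedge\partial_{x_i}$ with $f(0,x)=0$, and its $\Gamma(\wedge^2 T_ZM)$ $C^0$-norm measures $g:=f/t$ rather than $f$ itself. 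However, both $\pi$ and $\Psi(\pi')$ lie in $\Gamma(\wedge^2 T_ZM)$, so the transverse component of their difference also vanishes along $Z$; Hadamard's lemma then gives a local bound of $|g_{\Psi(\pi')}-g_\pi|_{C^0}$ by $|\Psi(\pi')-\pi|_{C^1}$, which, combined with the already established $C^1$-continuity and a finite covering of the compact hypersurface $Z$ by coordinate charts, yields the desired $C^0$-continuity into $\Gamma(\wedge^2 T_ZM)$ and finishes the proof.
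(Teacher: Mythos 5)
Your construction of $\Phi$ is essentially the paper's: locate the singular locus of $\pi'$ as the graph of a section $\phi_{\pi'}$ over $Z$ and push it back by a cutoff fibrewise translation (working directly in the normal bundle even spares you the paper's separate double-cover argument for non-orientable $M$). The gap is in the continuity step, and it sits exactly at the subtle point the remark before the lemma warns about. You assert that $\pi'\mapsto\Psi(\pi')=\Phi(\pi')_*(\pi')$ is $C^1$-continuous into $\Gamma(\wedge^2 TM)$, but this does not follow from the $C^1$-continuity of $\pi'\mapsto\Phi(\pi')$, and it is in general false: the coefficients of a pushforward $\Phi_*w$ involve the first derivatives of $\Phi$, so their first derivatives involve the \emph{second} derivatives of $\Phi(\pi')$, i.e.\ the second derivatives of the graph function $\phi_{\pi'}$, and these are governed by second derivatives of $\pi'$, which are not controlled on a $C^1$-neighborhood. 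What you actually get is only $C^0$-continuity of $\Psi$ into $\Gamma(\wedge^2 TM)$, and then your appeal to Hadamard's lemma --- which converts $C^1$-smallness of a transverse coefficient $f$ with $f|_{t=0}=0$ into $C^0$-smallness of $f/t$ --- has nothing to feed on.

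The way around this, and what the paper does, is not to apply Hadamard to the pushed-forward bivector but to compute the divided coefficient explicitly in terms of the data of $\pi'$ itself. In coordinates the transverse coefficient of $\Psi(\pi')$ is $f(t,x)=A^l_i(t+g(x),x)-\sum_j B^l_{ij}(t+g(x),x)\,\partial g/\partial x_j$, and the tangency of $\pi'$ to its singular locus makes this vanish at $t=0$; Hadamard's integral formula then expresses $f/t$ as $\int_0^1\partial_t\bigl(A^l_i-\sum_j B^l_{ij}\,\partial g/\partial x_j\bigr)(st+g(x),x)\,ds$, which involves only \emph{first} derivatives of the coefficients of $\pi'$ and of $g$. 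When one estimates the difference of this expression for two $C^1$-close Poisson structures, the uncontrolled second $t$-derivatives appear only for the fixed reference structure, multiplied by the $C^0$-small difference $\delta$ of the graph functions, yielding a bound of the form $\|\tilde a^l_i-a^l_i\|_{C^0}\leq C\bigl(\|\delta\|_{C^0}+\|\pi''-\pi'\|_{C^1}\bigr)$ and hence the desired continuity. As written, your proof is missing exactly this argument, and its final sentence cannot be repaired by the tools you invoke.
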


\begin{proof}
First we assume that $M$ is orientable. We prove this case in two steps: first we construct $\Phi$ and $\mathcal{V}$ and second we prove that $\Phi$ has the desired continuity property.\\

\textit{Step 1. Construction of $\Phi$ and $\mathcal{V}$.}

Since $M$ is orientable, the normal bundle to $Z$ is trivial, and we can find a tubular neighborhood $E\to M$, $E\cong \mathbb{R}\times Z$. Denote by $t$ the coordinate on $\mathbb{R}$. Now,
$\mu:=\frac{1}{t}\wedge^n \pi|_{E}$ is a nowhere vanishing section of $\wedge^{2n}TE$, and for every smooth
bivector $w$, we have that $\wedge^n w|_{E}=h_w(t,x)\mu$, for some smooth function $h_w$ on $E$. Moreover, the
assignment $w\mapsto h_w|_{[-2,2]\times Z}$ is continuous with respect to the $C^1$-topologies, and $h_{\pi}=t$.

Consider the $C^1$-open neighborhood $\mathcal{D}\subset C^{\infty}([-2,2]\times Z)$ around the $t$ consisting of functions $h$ such that ${\partial h(t,x)}/{\partial t}>0$ and $|h(t,x)-t|<1$. Then, for any $h\in\mathcal{D}$, we have that the function
\[\varphi_h: [-2,2]\times Z\rmap \mathbb{R}\times Z, \ \  (t,x)\mapsto (h(t,x),x),\]
is a diffeomorphism onto its image and that $\varphi_h\left((-2,2)\times Z\right)\supset[-1,1]\times Z$. Moreover, the
assignment $\varphi_h\mapsto \varphi_h^{-1}|_{[-1,1]\times Z}$ is continuous with respect to the $C^1$-topologies.

Let $\mathcal{V}'$ be the $C^1$-open neighborhood in $\Gamma(\wedge^2TM)$ consisting of elements $w$ that satisfy
\[\left(\wedge^n w\right)^{-1}(0)\subset (-2,2)\times Z \ \ \textrm{ and }\ \  h_w|_{[-2,2]\times Z}\in\mathcal{D}.\]
Clearly, $\pi\in \mathcal{V}'$. Write $\varphi_{h_w}^{-1}(t,x)=(g_w(t,x),x)$ and $g_w(x):=g_w(0,x)$. We have that
\[\left(\wedge^n w\right)^{-1}(0)=Z_{w}:=\{(g_w(x),x) : x\in Z\},\]
and since $\partial h_w/\partial t\neq 0$ on $[-2,2]\times Z$, $\wedge^n w$ is transverse to the zero-section. Thus, if
$\pi'\in \mathcal{V}'$ is Poisson, then $\pi'$ is log symplectic with singular locus $Z_{\pi'}$.

We fix a smooth compactly supported function $\chi:\mathbb{R}\to [0,1]$, such that $\chi(t)=1$ for $t\in [-2,2]$ and
$|\chi'(t)|<1/2$ for $t\in\mathbb{R}$. Consider the following diffeomorphism:
\[\Phi(w):M\diffto M,\  \Phi(w)|_{M\backslash E}=\id_{M\backslash E}, \ \Phi(w)|_{E}(t,x):=(t-\chi(t)g_w(x),x).\]
The conditions $|\chi'(t)|<1/2$ and $|g_w(x)|\leq 2$ imply that $pr_1\circ \partial \Phi(w)/\partial t>0$; and hence $\Phi(w)$ is 
indeed a diffeomorphism. The fact that $\chi(t)=1$ on $[-2,2]$ implies that $\Phi(w)$ maps $Z_{w}$ onto $Z$. Let $\mathcal{V}:=\mathcal{V}'\cap\Poiss(M)$.\\

\textit{Step 2. Continuity of $\Psi$.}

Clearly, the assignment $w\mapsto \Phi(w)$ is continuous for the $C^1$-topologies, and therefore the assignment
$w\mapsto \Phi(w)_*(w)$ is continuous for the $C^1$-topology on $\mathcal{V}'$ and the $C^0$-topology on
$\Gamma(\wedge^2TM)$. For a Poisson structure $\pi'\in\mathcal{V}'$, we have that $\Phi(\pi')_*(\pi')$ belongs to
$\Log(M,Z)$. Now, here comes the more subtle point of the proof: the fact that the $C^0$-topology on $\Log(M,Z)$ is
not the subspace topology induced from $\Gamma(\wedge^2TM)$ does not allow us to conclude yet the proof. Also, for
an arbitrary $w\in\mathcal{V}'$, $\Phi(w)_*(w)$ is not an element of $\Gamma(\wedge^2T_ZM)$; therefore we have
to restrict $\Phi$ to $\mathcal{V}=\mathcal{V}'\cap\Poiss(M)$.

Consider a finite open cover of $Z$ by coordinate charts $\{U_l\cong \mathbb{R}^{2n-1}\}_{l}$ such that the closed balls $\{\overline{B}_l\}_{l}$ of radius 1 still cover $Z$. In one of these charts $U_l$ with coordinates $(x_i)$, an element $\sigma\in\Gamma(\wedge^2T_ZM)$ can be written as
\[\sigma=\sum_i a^l_i(t,x) t \frac{\partial}{\partial t}\wedge\frac{\partial}{\partial x_i}+\sum_{i,j}b^l_{ij}(t,x)\frac{\partial}{\partial x_i}\wedge\frac{\partial}{\partial x_j}.\]
The $C^0$-norm of $\sigma|_{[-2,2]\times Z}$ (as an element in $\Gamma(\wedge^2T_ZM)$) is the supremum of the functions
$a^l_i$ and $b^l_{ij}$ on $[-2,2]\times \overline{B}_l$. Thus to compute the $C^0$-norm of $\sigma$ on $M$, one needs to measure its $C^0$-norm as an element in $\Gamma(\wedge^2TM)$ and the supremum norm of the coefficients $a^l_{i}$ in a small neighborhood of $Z$. In our
case, we know that the map $\mathcal{V}\ni \pi'\mapsto \Phi(\pi')_*(\pi')\in\Gamma(\wedge^2 TM)$ is continuous for
the $C^0$-topology on the second space, thus it suffices to check that the corresponding functions ``$a^l_i$'' also vary
continuously.

Consider $\pi'\in \mathcal{V}$, and denote $g:=g_{\pi'}$. Denote the local expression of $\pi'$ on $U_l$ by
\[\pi':=\sum_i A^l_i(t,x)\frac{\partial}{\partial t}\wedge\frac{\partial}{\partial x_i}+\frac{1}{2}\sum_{i,j}B^l_{ij}(t,x)\frac{\partial}{\partial x_i}\wedge\frac{\partial}{\partial x_j}.\]
The fact that $\pi'$ is tangent to $Z_g$ is written in coordinates as follows:
\begin{equation}\label{tang}
\sum_i A^l_i(g(x),x)\bigg(\frac{\partial}{\partial x_i}+\frac{\partial g}{\partial x_i}(x)\frac{\partial}{\partial t}\bigg)+\sum_{i,j}B^l_{ij}(g(x),x)\frac{\partial g}{\partial x_i}(x)\frac{\partial}{\partial x_j}=0.
\end{equation}
Now, $\Phi(\pi')|_{[-2,2]\times Z}$ is of the form $(t,x)\mapsto (t-g(x),x)$, and therefore
\begin{align*}
\Phi(\pi')_*(\pi')&|_{[-1,1]\times Z}=\frac{1}{2}\sum_{i,j}B^l_{ij}(t+g(x),x)\frac{\partial}{\partial x_i}\wedge\frac{\partial}{\partial x_j}+\\
&+\sum_i\bigg( A^l_i(t+g(x),x)-\sum_jB^l_{ij}(t+g(x),x)\frac{\partial g}{\partial x_j}(x)\bigg)\frac{\partial}{\partial t}\wedge\frac{\partial}{\partial x_i}.
\end{align*}
Equation (\ref{tang}) implies that the coefficient of $\frac{\partial}{\partial
t}\wedge\frac{\partial}{\partial x_i}$ vanishes at $t=0$ and therefore we get that the coefficient of $t\frac{\partial}{\partial
t}\wedge\frac{\partial}{\partial x_i}$ is equal to
\begin{align*}
a^l_i(t,x):=&\frac{1}{t}\bigg( A^l_i(t+g(x),x)-\sum_jB^l_{ij}(t+g(x),x)\frac{\partial g}{\partial x_j}(x)\bigg)=\\
&=\int_{0}^1\bigg(\frac{\partial A^l_i}{\partial t}(st+g(x),x)-\sum_j\frac{\partial B^l_{ij}}{\partial
t}(st+g(x),x)\frac{\partial g}{\partial x_j}(x)\bigg) ds.
\end{align*}
We see now that this explicit formula, which holds on $[-1,1]\times Z$, implies that these coefficients depend continuously on $\pi'$, i.e., the map $\pi'\mapsto (a^l_i)$ is continuous with respect to the $C^1$-topology on $\mathcal{V}$ and the
$C^0$-topology on $C^{\infty}([-1,1]\times Z)$. Indeed, let $\pi''$ be a Poisson structure $C^1$-close to $\pi'$.
Denote the coefficients of $\pi''$ on $U_l$ by $\tilde{A}^l_i$, $\tilde{B}^l_{ij}$. Similarly, for $\pi''$ we get a function $\tilde{g}$ taking values in $[-2,2]$ which, by the previous argument, is $C^1$-close to $g$. We have then
\[\tilde{A}^l_i=A^l_i+\alpha^l_i,\quad \tilde{B}^l_{ij}=B^l_{ij}+\beta^l_{ij},\quad \tilde{g}=g+\delta,\]
for some functions $\alpha^l_i$, $\beta^l_{ij}$, $\delta$ which are $C^1$-small. We need to compute the $C^0$-norm of $\tilde{a}^l_i-a^l_i$ on $[-1,1]\times \overline{B}_l$, for which we use the formula
\begin{align*}
(\tilde{a}^l_i-&a^l_i)(t,x)=\int_0^1\bigg(\int_0^1 \frac{\partial^2 A^l_i}{\partial t^2}\Big(\delta(x)s'+st+g(x),x\Big)\delta(x)ds' + \frac{\partial \alpha^l_i}{\partial t}(st+\tilde{g}(x),x)\bigg)ds- \\
& -  \sum_j \Big(\frac{\partial g}{\partial x_j} +\frac{\partial \delta}{\partial x_j}\Big) \int_0^1 \Big(\int_0^1 \frac{\partial^2 B^l_{ij}}{\partial t^2}\big(\delta(x)s'+st+g(x),x\big)\delta(x)ds' + \frac{\partial \beta^l_{ij}}{\partial t}(st+\tilde{g}(x),x) \Big) ds
\end{align*}
Note that the $C^1$-norms of $ \alpha^l_i$ and $\beta^l_{ij}$ on $[-3,3]\times \overline{B}_l$ are bounded by a multiple of $||\pi''-\pi'||_{C^1}$. Now, since $st+\tilde{g}(x)\in [-3,3]$ for $s,t\in[0,1]$ and $x\in Z$, there is a constant $C>0$ such that
\begin{align*}
||\tilde{a}^l_i-a^l_i||_{C^0} \leq C\big(||\delta||_{C^0}+||\pi''-\pi'||_{C^1}\big).
\end{align*}
This implies the desired continuity and finishes the proof in the case when $M$ is orientable.\\

\textit{Step 3. The non-orientable case.}

If $M$ is not orientable, consider the orientable double cover $p:\widetilde{M}\to M$, and let
$\gamma:\widetilde{M}\diffto \widetilde{M}$ be the corresponding deck transformation. Then
$\widetilde{\pi}:=p^*(\pi)$ is a $\gamma$-invariant log symplectic structure on $\widetilde{M}$ with singular locus
$\widetilde{Z}:=p^{-1}(Z)$. Consider a $\gamma$-invariant tubular neighborhood of $\widetilde{E}\to
\widetilde{M}$, which admits a trivialization $\widetilde{E}\diffto \mathbb{R}\times \widetilde{Z}$ on which
$\gamma$ acts by $\gamma(t,x):=(-t,\gamma(x))$ (for more details on this construction, see \cite{MO}). Let
$\widetilde{\mathcal{V}}$ be the $C^1$-neighborhood of $\widetilde{\pi}$ constructed in the first part, and let
$\widetilde{\Phi}:\widetilde{\mathcal{V}}\to \Diff(\widetilde{M})$ be the corresponding map. Let $\mathcal{V}$ be
the set of Poisson structures $\pi'$ such that $p^*(\pi')\in\widetilde{\mathcal{V}}$. Clearly, $\mathcal{V}$ is a
$C^1$-neighborhood of $\pi$. Note that the construction of $\widetilde{\Phi}$ is such that
$\widetilde{\Phi}(p^*(\pi'))$ is $\gamma$-equivariant: we have that $\gamma^*(\mu)=-\mu$, thus $h_{p^*(\pi')}\circ
\gamma(t,x)=-h_{p^*(\pi')}(t,x)$, thus also $g_{p^*(\pi')}\circ \gamma(x)=-g_{p^*(\pi')}(x)$; and therefore, by
choosing $\chi(t)$ to be an even function, we obtain that $\widetilde{\Phi}(p^*(\pi'))$ is $\gamma$-equivariant. This
implies that $\widetilde{\Phi}(p^*(\pi'))$ covers a diffeomorphism which we denote by $\Phi(\pi')\in\Diff(M)$ and which
satisfies
\[\widetilde{\Phi}(p^*(\pi'))_*(p^*(\pi'))=p^*\left(\Phi(\pi')_*(\pi')\right).\]
Since $\widetilde{\Phi}(p^*(\pi'))_*(p^*(\pi'))\in \Log(\widetilde{M},\widetilde{Z})$, it follows that
$\Phi(\pi')_*(\pi')\in\Log(M,Z)$. Moreover, since the pull-back maps $p^*:\mathcal{V}\to \widetilde{\mathcal{V}}$
and $p^*:\Log(M,Z)\to \Log(\widetilde{M},\widetilde{Z})$ are embeddings for the $C^1$-topology and for the $C^0$-topology, respectively, it follows that the map $\pi'\mapsto \Phi(\pi')_*(\pi')$ is continuous. This concludes the proof.
\end{proof}

\section{Proof of Theorem \ref{deformations1}}\label{pruf}
In this section we prove part (b) and (c) of Theorem \ref{deformations1}. By the result in the previous section (Lemma \ref{sing_locus}), all small deformations can be represented by nearby log symplectic structures which have the same singular locus. In this setup, the b-version of the Moser argument (Lemma \ref{Moser}) is used to prove that log symplectic structures are
determined up to diffeomorphism by their b-cohomology. This will conclude the proof of part (b) of Theorem
\ref{deformations1}. Finally, part (c) is proved by showing that small diffeomorphisms do not change the b-cohomology class.

Fix $(M^{2n},\pi)$ a compact log symplectic manifold with singular locus $Z$. Let $\omega:=\pi^{-1}\in \Omega^2_Z(M)$ denote the inverse of $\pi$. Consider, as in the statement of Theorem \ref{deformations1}, $\varpi_1,\ldots,\varpi_l$ closed two-forms on $M$ and $\gamma_1,\ldots,\gamma_k$ closed
one-forms on $Z$ such that their cohomology classes form a basis of $H^2(M)$ and $H^1(Z)$ respectively. For $\epsilon\in\mathbb{R}^l$ and $\delta\in \mathbb{R}^k$, denote by $\varpi_{\epsilon}:=\sum_{i=1}^l\epsilon_i\varpi_i$ and $\gamma_{\delta}:=\sum_{i=1}^k\delta_i\gamma_i$. Fix a
tubular neighborhood $E$ of $Z$, with projection $p:E\to Z$, and let $\lambda$ be a distance function for $E$. Denote by
\[\omega_{\epsilon,\delta}:=\omega+\varpi_{\epsilon}+d\log(\lambda)\wedge p^*(\gamma_{\delta}).\]
The Mazzeo-Melrose decomposition implies that the map $\mathbb{R}^{l+k}\to H^2_Z(M)$, $(\epsilon,\delta)\mapsto [\omega_{\epsilon,\delta}]$ is a linear isomorphism.

\vspace{0.2cm}

Next, we construct a convex neighborhood consisting of non-degenerate b-forms, which will be used in the proof of part (b).

\begin{lemma}
There is a $C^0$-open neighborhood $\mathcal{W}$ around $\omega$ in the space of all closed b-two-forms
$\Omega^2_{Z,closed}(M)$, such that every $\omega'\in \mathcal{W}$ is nondegenerate, and moreover, if
$(\epsilon,\delta)\in\mathbb{R}^{l+k}$ is such that $[\omega']=[\omega_{\epsilon,\delta}]$, then the entire path
$(1-t)\omega'+t\omega_{\epsilon,\delta}$ for $t\in [0,1]$ is contained in $\mathcal{W}$.
\end{lemma}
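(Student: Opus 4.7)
The plan is to combine the $C^0$-openness of nondegeneracy with the $C^0$-continuity of the Mazzeo--Melrose coordinates $(\epsilon(\omega'),\delta(\omega'))$, and then shrink a convex $C^0$-ball around $\omega$ so that both endpoints of the straight-line path land inside it.

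\emph{Step 1 (openness).} Nondegeneracy of a b-two-form $\omega'$ is equivalent to $\wedge^n\omega'$ being a nowhere-vanishing section of the line bundle $\wedge^{2n}T^*_ZM$. Since $M$ is compact and $\wedge^n\omega$ is nowhere zero, this condition survives $C^0$-small perturbations, so we may fix a convex $C^0$-open neighborhood $\mathcal{W}_0\subset\Omega^2_{Z,closed}(M)$ of $\omega$ (a small $C^0$-ball) consisting entirely of nondegenerate closed b-two-forms.

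\emph{Step 2 (continuous Mazzeo--Melrose coordinates).} By the Mazzeo--Melrose theorem, $(\epsilon,\delta)\mapsto[\omega_{\epsilon,\delta}]$ is a bijection $\mathbb{R}^{l+k}\to H^2_Z(M)$ sending $0$ to $[\omega]$, so every closed b-two-form $\omega'$ has unique coordinates $(\epsilon(\omega'),\delta(\omega'))$ with $[\omega']=[\omega_{\epsilon(\omega'),\delta(\omega')}]$ and $(\epsilon(\omega),\delta(\omega))=0$. These depend $C^0$-continuously on $\omega'$: the $\delta_j$'s are integrals of the closed $1$-form $\theta':=\iota_{\xi_Z}\omega'|_Z$ on $Z$ against a basis of smooth $1$-cycles dual to $\{[\gamma_j]\}$, and this is manifestly continuous in the $C^0$-norm on $\Omega^2_Z(M)$; the $\epsilon_i$'s are then recovered by evaluating $\omega'-\omega$ via regularized integration on a basis of $2$-dimensional b-cycles $i_j:(N_j,W_j)\to(M,Z)$ dual to $\{[\varpi_i]\}$, using Lemma \ref{functorial1} to kill the $\sigma(H^1(Z))$-component, and this functional is also continuous in the $C^0$-topology on closed b-two-forms.

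\emph{Step 3 (shrinking).} The composition $\omega'\mapsto\omega_{\epsilon(\omega'),\delta(\omega')}=\omega+\varpi_{\epsilon(\omega')}+d\log(\lambda)\wedge p^*(\gamma_{\delta(\omega')})$ is $C^0$-continuous in $\omega'$ (linearity of $(\epsilon,\delta)\mapsto\omega_{\epsilon,\delta}$) and sends $\omega$ to $\omega$. Hence we may take $\mathcal{W}\subset\mathcal{W}_0$ to be a convex $C^0$-ball around $\omega$ small enough that $\omega'\in\mathcal{W}$ forces $\omega_{\epsilon(\omega'),\delta(\omega')}\in\mathcal{W}$ as well. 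Convexity then guarantees that the entire segment $(1-t)\omega'+t\omega_{\epsilon(\omega'),\delta(\omega')}$ lies in $\mathcal{W}$ for all $t\in[0,1]$, and so consists of nondegenerate closed b-two-forms. The only delicate point is the continuity claim in Step~2, which is subtle precisely because the $C^0$-topology on $\Gamma(\wedge^2T^*_ZM)$ differs from the smooth $C^0$-topology by a $d\log(\lambda)$-weighting, as emphasized in the remark preceding Lemma \ref{sing_locus}; passing through regularized integration on b-cycles (rather than trying to integrate the smooth representative $\alpha'=\omega'-\sigma(\theta')$ directly) is what makes this step work in the b-$C^0$-norm.
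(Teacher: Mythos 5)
Your Steps 1 and 2 follow the same strategy as the paper's proof: a convex $C^0$-neighborhood of $\omega$ consisting of nondegenerate closed b-two-forms, together with $C^0$-continuity of $\omega'\mapsto[\omega']$, i.e.\ of the coordinates $(\epsilon(\omega'),\delta(\omega'))$. Both steps are sound. (The detour through regularized volumes in Step 2 is not needed: the splitting $\omega'\mapsto\big(\omega'-\sigma(\iota_{\xi_Z}\omega'|_Z),\,\iota_{\xi_Z}\omega'|_Z\big)$ is itself $C^0$-continuous from b-forms to smooth forms, so one may integrate the smooth representative over compact surfaces directly, which is what the paper does.)

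The gap is in Step 3. The map $F(\omega'):=\omega_{\epsilon(\omega'),\delta(\omega')}$ is affine with $F(\omega)=\omega$, and its linear part $L(\omega'-\omega)=F(\omega')-\omega$ is a continuous \emph{idempotent} (a projection onto the span of the $\varpi_i$ and $d\log(\lambda)\wedge p^*(\gamma_j)$, since $[F(\omega')]=[\omega']$ forces $F\circ F=F$). A nonzero idempotent has operator norm at least $1$, and in general strictly greater than $1$, in any norm inducing the $C^0$-topology; consequently no ball around $\omega$ need be mapped into itself by $F$, and shrinking the radius does not help --- continuity at the fixed point only yields $F(B_{r'})\subset B_{r}$ for some smaller $r'$, not $F(B_r)\subset B_r$. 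So the sentence ``we may take $\mathcal{W}$ to be a convex $C^0$-ball small enough that $\omega'\in\mathcal{W}$ forces $\omega_{\epsilon(\omega'),\delta(\omega')}\in\mathcal{W}$'' does not follow. The repair is cheap. One option: set $\mathcal{W}:=\mathcal{W}_0\cap F^{-1}(\mathcal{W}_0)$, which is $C^0$-open and convex (as $F$ is affine and continuous); for $\omega'\in\mathcal{W}$ both endpoints of the segment lie in the convex set $\mathcal{W}_0$, and since $F$ is idempotent and constant along the segment, the whole segment lies in $\mathcal{W}$. The paper's version of the same idea is to put $U:=\{(\epsilon,\delta):\omega_{\epsilon,\delta}\in\mathcal{W}_0\}$ and $\mathcal{W}:=\{\omega'\in\mathcal{W}_0: [\omega']=[\omega_{\epsilon,\delta}]\text{ for some }(\epsilon,\delta)\in U\}$; then every point of the segment lies in $\mathcal{W}_0$ by convexity and has the same cohomology class $[\omega_{\epsilon,\delta}]$ with $(\epsilon,\delta)\in U$, hence lies in $\mathcal{W}$.
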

\begin{proof}
Fix a metric on $T_ZM$, and consider the following map on $\Omega^2_Z(M)$:
\[\omega'\in \Omega^2_Z(M), \ \omega'\mapsto ||\omega'||:=\sup\{|\pi^{\sharp}\circ \omega'^{\sharp}(V)-V| : V\in T_ZM, |V|=1\}.\]
Clearly, $||\omega||=0$, thus the following is a $C^0$-neighborhood of $\omega$:
\[\widetilde{\mathcal{W}}:=\{\omega'\in\Omega^2_Z(M) : d\omega'=0,\ ||\omega'||<1\}\subset \Omega^2_{Z,closed}(M).\]
Note that $\widetilde{\mathcal{W}}$ is a convex neighborhood consisting entirely of log symplectic structures on
$(M,Z)$.
Let $U\subset\mathbb{R}^{l+k}$ be the space consisting of pairs $(\epsilon,\delta)$ such that
$\omega_{\epsilon,\delta}\in \widetilde{\mathcal{W}}$. Clearly, $U$ is a convex open neighborhood of $0$. Next, note that taking the cohomology class of a closed b-form $\omega'\mapsto [\omega']$ is continuous for the
$C^0$-topology on the space of all closed b-forms. To see this, observe that the decomposition form Section \ref{b-geom}
\[\Omega_{Z,closed}^{\bullet}(M)\diffto \Omega_{closed}^{\bullet}(M)\oplus \Omega_{closed}^{\bullet-1}(Z)\]
is $C^0$-continuous, and that taking de Rham cohomology is also $C^0$-continuous, because de Rham cohomology
can be detected by integrating along compact submanifolds, and integration is $C^0$-continuous. This shows that the
following is $C^0$-open.
\[\mathcal{W}:=\{\omega'\in\widetilde{\mathcal{W}} : [\omega']=[\omega_{\epsilon,\delta}] \textrm{ for some }(\epsilon,\delta)\in U\}.\]
Clearly, $\mathcal{W}$ has the required properties.
\end{proof}

\begin{proof}[of Theorem \ref{deformations1}, part (b)]
Consider the $C^0$-neighborhood $\mathcal{W}$ from the previous lemma. Consider also the $C^1$-open neighborhood $\mathcal{V}\subset\Poiss(M)$, the map $\Phi:\mathcal{V}\to \Diff(M)$, and the map $\Psi:\mathcal{V}\to \Log(M,Z)$,
$\Psi(\pi'):=\Phi(\pi')_*(\pi')$ from Lemma \ref{sing_locus}. The inversion map $\Log(M,Z)\to \Omega^2_Z(M)$ is
continuous for the $C^0$-topologies, and by the continuity of $\Psi$, the following is a $C^1$-open neighborhood around $\pi$:
\[\mathcal{U}:=\{\pi'\in\mathcal{V} : \Psi(\pi')^{-1}\in \mathcal{W}\}\subset \Poiss(M).\]
Let $\pi'\in\mathcal{U}$. Then $\pi'$ is diffeomorphic to the log symplectic structure
$(\omega')^{-1}:=\Psi(\pi')\in\Log(M,Z)$ via the map $\Phi(\pi')$.
Let $(\epsilon,\delta)\in \R^{l+k}$ be such that $[\omega']=[\omega_{\epsilon,\delta}]$. Since $\omega'\in\mathcal{W}$, the Moser argument from Lemma \ref{Moser} implies that $\omega'$ is diffeomorphic to $\omega_{\epsilon,\delta}$.
\end{proof}

To prove part (c), first we show the existence of path connected neighborhoods of $\mathrm{id}_M$ in the space of diffeomorphisms that fix $Z$.

\begin{lemma}
Let $(M,Z)$ be a compact b-manifold. Then there exists a $C^1$-open neighborhood $\mathcal{D}$ around $\id_M$ in $\Diff(M)$, such that, for every
$\varphi\in\mathcal{D}$ that leaves $Z$ invariant, there is a smooth isotopy $\varphi_t$, for $t\in [0,1]$, with
$\varphi_0=\id_M$ and $\varphi_1=\varphi$, consisting of diffeomorphisms that send $Z$ to $Z$.
\end{lemma}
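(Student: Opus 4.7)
The plan is to equip $M$ with a Riemannian metric for which $Z$ is totally geodesic, and then to use the geodesic interpolation $\varphi_t(x) := \exp_x(t\exp_x^{-1}\varphi(x))$ as the desired isotopy. The totally geodesic condition is what forces the isotopy to preserve $Z$ whenever $\varphi$ does, while the $C^1$-closeness of $\varphi$ to $\id_M$ ensures each $\varphi_t$ is a genuine diffeomorphism.

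First I would construct a metric $g$ on $M$ such that $Z$ is totally geodesic. Fix a tubular neighborhood $E\cong\nu_Z\to Z$ of $Z$ with projection $p$, choose any Euclidean bundle metric $g_{\mathrm{fib}}$ on $\nu_Z$ and any metric $g_Z$ on $Z$, and set $g:=p^*g_Z + g_{\mathrm{fib}}$ on a neighborhood $E'\subset E$ of the zero section. Extend this to a global metric $g$ on $M$ by gluing with an arbitrary metric via a partition of unity that is constantly equal to $1$ on an even smaller neighborhood of $Z$. For this metric the second fundamental form of $Z$ vanishes identically, so $Z$ is totally geodesic; equivalently, $\exp_x$ maps $T_xZ$ into $Z$ for every $x\in Z$.

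Next I would pick a $C^1$-neighborhood $\mathcal{D}$ of $\id_M$ in $\Diff(M)$ small enough that two properties hold simultaneously: for every $\varphi\in\mathcal{D}$, the point $\varphi(x)$ lies within the injectivity radius of $x$ for all $x\in M$ (so that $v_\varphi(x):=\exp_x^{-1}\varphi(x)$ defines a smooth vector field), and moreover the family
\[
\varphi_t(x) := \exp_x\bigl(t\,v_\varphi(x)\bigr),\qquad t\in[0,1],
\]
consists of diffeomorphisms of $M$. The first requirement is $C^0$-open by compactness of $M$. The second follows from a standard application of the inverse function theorem uniformly in $t$: since $\varphi_0=\id_M$ and $(t,\varphi)\mapsto \varphi_t$ is continuous for the $C^1$-topologies, $d\varphi_t$ stays close enough to the identity, and injectivity persists, provided $\varphi$ is $C^1$-close enough to $\id_M$.

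Finally, assume $\varphi\in\mathcal{D}$ satisfies $\varphi(Z)=Z$. For $x\in Z$ the target $\varphi(x)$ also lies in $Z$, so by the totally geodesic property and uniqueness of $\exp_x^{-1}$ within the injectivity radius, the vector $v_\varphi(x)$ must lie in $T_xZ$; consequently the whole geodesic $t\mapsto\exp_x(tv_\varphi(x))$ stays in $Z$, giving $\varphi_t(Z)\subset Z$. Because $\varphi_t$ is a diffeomorphism of $M$, $\varphi_t|_Z$ is an injective immersion of $Z$ into itself, and compactness of $Z$ upgrades this to $\varphi_t(Z)=Z$, as required. The main point requiring care is the metric construction guaranteeing that $Z$ is totally geodesic, without which the interpolation $\varphi_t$ would drift off $Z$; everything else reduces to the classical fact that $C^1$-small perturbations of the identity are isotopic to it via the exponential map.
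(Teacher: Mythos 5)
Your proposal is correct and follows essentially the same route as the paper: choose a metric making $Z$ totally geodesic (the paper says ``geodesically closed'' and does not spell out the construction, which you do via a product metric on a tubular neighborhood), write $\varphi=\exp_*(V)$ for a $C^1$-small vector field $V$, interpolate by $\varphi_t=\exp_*(tV)$, and use that a short geodesic joining two points of $Z$ must have initial velocity in $TZ$ to conclude the isotopy preserves $Z$. The only cosmetic difference is that the paper parametrizes $\mathcal{D}$ directly as $\exp_*(\mathcal{A})$ for a neighborhood $\mathcal{A}$ of the zero section rather than recovering $V=\exp^{-1}\circ\varphi$ afterwards.
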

\begin{proof}
First, recall the standard construction of a $C^1$-neighborhood of $\id_M$ in $\Diff(M)$. Let
$g$ be a metric on $M$, with exponential map $\exp:TM\to M$. There is a $C^1$-open neighborhood $\mathcal{A}\subset
\Gamma(TM)$ around the zero-section, such that for every $V\in\mathcal{A}$, the induced map
\[\exp_*(V)\in C^{\infty}(M,M), \ \ \exp_*(V)(x):=\exp(V_x)\]
is a diffeomorphism. Moreover, $\exp_*$ is a homeomorphism for the $C^1$-topologies onto a $C^1$-open neighborhood
$\mathcal{D}\subset \Diff(M)$ around $\id_M$.

Consider a metric for which $Z$ is geodesically closed. We claim that for such a metric, if $\mathcal{A}$ is small enough then the set $\mathcal{D}$ has the required property.
Let $\varphi\in\mathcal{D}$ be such that $\varphi$ sends $Z$ to $Z$, and write $\varphi=\exp_*(V)$, for some $V\in
\mathcal{A}$. We consider the isotopy $\varphi_t:=\exp_*(tV)$ for $t\in [0,1]$. For $x\in Z$, the curve
$\varphi_t(x)=\exp(tV_x)$ is a geodesic that starts and ends in $Z$. For small enough $\mathcal{A}$, we may assume
that $V_x$ is shorter than the injectivity radius of $\exp$. Since $Z$ is geodesically closed, this implies that $V_x\in
T_xZ$, and so $\varphi_t(x)\in Z$ for all $t$.
\end{proof}

\begin{proof}[of Theorem \ref{deformations1}, part (c)]
Consider the open neighborhood $\mathcal{D}$ described in the previous lemma. Let $\pi_0,\pi_1\in \Log(M,Z)$ be two log symplectic structures such that there is some $\varphi\in\mathcal{D}$ for which $\varphi_*(\pi_0)=\pi_1$. Denote by $\omega_0:=\pi_0^{-1}$ and $\omega_1:=\pi_1^{-1}$. We claim that
$[\omega_0]=[\omega_1]$. Since $\varphi$ sends $Z$ to $Z$, there is an isotopy $\varphi_t$ with the
properties from the lemma, i.e.\ $\varphi_t$ is a b-diffeomorphism for all $t\in [0,1]$. Denote the generating time-dependent
b-vector field by $X_t$,
\[X_t(x)=\frac{d\varphi_t}{dt}\left(\varphi_t^{-1}(x)\right).\]
We apply now the ``reverse Moser trick''. Since $\varphi_1^*(\omega_1)=\omega_0$, we have that
\[\omega_0-\omega_1=\int_{0}^1\frac{d}{dt}\left(\varphi_t^*(\omega_1)\right)dt=\int_{0}^1\varphi_t^*(L_{X_t}\omega_1)dt=d\int_0^1\varphi_{t}^*(\iota_{X_t}\omega_1)dt.\]
Hence $[\omega_0]=[\omega_1]\in H^2_Z(M)$, and this proves part (c) of Theorem \ref{deformations1}.
\end{proof}

\section{Examples}\label{exam}

In this section we present some simple examples to illustrate some phenomena that appear in deforming log symplectic structures.

\begin{example}\rm
Consider the unit sphere $S^2$ embedded in $\mathbb{R}^3=\{(x,y,z)\}$. We use the angle
$\theta:=\tan^{-1}(y/x)\in S^1$ and $z\in (-1,1)$ as coordinates on $S^2$ minus the two poles. The bivector
$\partial_{\theta}\wedge\partial_{z}$ extends to a nondegenerate Poisson structure on $S^2$. Consider the log symplectic
structure
\[\pi:=z\partial_{\theta}\wedge\partial_z.\]
The singular locus of $\pi$ is the equator $S^1:=\{z=0\}\cap S^2$, and the induced 1-form on $S^1$ is
$d\theta|_{S^1}$. The corresponding b-two-form is
\[\omega:=\frac{d{z}}{z}\wedge d{\theta}.\]
The generators of the b-cohomology group $H^2_{S^1}(S^2)$ are $[\omega]$ and $[dz\wedge d\theta]$. Under the
canonical decomposition $H^2_{S^1}(S^2)\cong H^2(S^2)\oplus H^1(S^1)$, $[\omega]$ corresponds to the
generator of $H^1(S^1)$ and $[dz\wedge d\theta]$ to the generator of $H^2(S^2)$. Every Poisson structure
$C^1$-near $\pi$ is isomorphic to one of the form
\[\pi_{\epsilon,\delta}:=\left(\omega+\epsilon dz\wedge d\theta+\delta \omega\right)^{-1}=\frac{z}{1+\epsilon z+\delta}\partial_{\theta}\wedge\partial_z.\]
Of course, this example fits in the classification of log symplectic structures on compact orientable surfaces
from \cite{Radko.2002}.
\end{example}

\begin{example}\rm
The log symplectic structure $\pi=z\partial_{\theta}\wedge\partial_z$ on $S^2$ is invariant under the symmetry
$\gamma(x,y,z):=(-x,-y,-z)$. Let $p:S^2\to\mathbb{P}^2:=S^2/\{\id,\gamma\}$ be the projection onto the real
projective plane. Invariance of $\pi$ implies that $p_*(\pi)$ is a log symplectic structure on $\mathbb{P}^2$ with singular locus
$p(S^1)=S^1/\{\id,\gamma\}\cong S^1$. Since $H^2(\mathbb{P}^2)=0$, we have that the b-cohomology is
concentrated in $H^1(p(S^1))\cong \mathbb{R}$. The corresponding 1-parameter family of deformations is
\[p_*(\pi_{0,\delta})=1/(1+\delta)p_*(\pi).\]
\end{example}

\begin{example}\rm
In the symplectic world, the analogous statement to Theorem \ref{deformations1} allows us to find a
$C^0$-neighborhood of the symplectic structure in which symplectic structures are classified by $H^2(M)$. In the
log symplectic case, the neighborhood in $\Poiss(M)$ has to be $C^1$, merely because transversality is a $C^1$
condition. Actually, for the proof of Lemma \ref{sing_locus}, one needs $C^1$-closeness only in a small neighborhood
around the singular locus, while outside one can consider $C^0$-open neighborhoods.

To illustrate this phenomenon, consider again the log symplectic structure $\pi=z\partial_{\theta}\wedge\partial_z$ on $S^2$.
We look at families of Poisson structures of the form
\[\pi_{\varepsilon}:=h_{\varepsilon}(z)z\partial_{\theta}\wedge\partial_z,\]
for small $\varepsilon>0$, where
$h_{\varepsilon}:\mathbb{R}\to [-1,1]$ are functions such that $h_{\varepsilon}(z)=1$ for $|z|\geq \varepsilon$. Note
that the $C^0$-distance between $\pi_{\varepsilon}$ and $\pi$ is less than $2\varepsilon$.

First, consider $h_{\varepsilon}$ such that it vanishes on $[-\epsilon/2,\epsilon/2]$. We see that $\pi$ can be
$C^0$-approximated by Poisson structures that are not log symplectic.

Consider now $h_{\varepsilon}$ such that it vanishes linearly at $\pm\varepsilon/2$, and only at these points. The
resulting structures are log symplectic with singular locus the three circles $z=-\varepsilon/2$, $z=0$, $z=\varepsilon/2$. This
shows that $\pi$ can be $C^0$-approximated by log symplectic structures whose singular locus is not diffeomorphic to that of
$\pi$.
\end{example}

\begin{example}\rm
Consider $S^2\times T^2$ endowed with the product log symplectic structure
\[\pi:=z\partial_{\theta}\wedge\partial_z+\partial_{\theta_2}\wedge\partial_{\theta_1},\]
where $\theta_1 ,\theta_2$ are the coordinates on the two-torus $T^2= S^1\times S^1$. The singular locus of $\pi$ is
the three-torus $S^1\times T^2$, where the first $S^1$ denotes the equator in $S^2$. The induced cosymplectic
structure on $S^1\times T^2$ is $(d\theta_1\wedge d\theta_2,d\theta)$. The b-two-form is
\[\omega=d\log|z|\wedge d{\theta}+d\theta_1\wedge d\theta_2.\]
The second b-cohomology $H^2_{S^1\times T^2}(S^2\times T^2)$ is spanned by
\[[dz\wedge d\theta], \ [d\theta_1\wedge d\theta_2],\ [d\log|z|\wedge d\theta],\ [d\log|z|\wedge d\theta_1],\ [d\log|z|\wedge d\theta_2],\]
where the first two generators correspond to $H^2(S^2\times T^2)$ and the last three to $H^1(S^1\times T^2)$.
By Theorem \ref{deformations1}, we conclude that any Poisson structure $C^1$-close to $\pi$ is diffeomorphic to a
log symplectic structure on $(S^2\times T^2, S^1\times T^2)$ with b-two-form
\[\omega'=\epsilon_1dz\wedge d\theta+ (1+\epsilon_2)d\theta_1\wedge d\theta_2+d\log|z|\wedge \left( (1+\delta_1)d{\theta}+\delta_2 d\theta_1+\delta_3d\theta_2\right),\]
for some $\epsilon_1,\epsilon_2,\delta_1,\delta_2,\delta_3\in \mathbb{R}$ small enough. The cosymplectic structure is
\[\left((1+\epsilon_2)d\theta_1\wedge d\theta_2, (1+\delta_1)d{\theta}+\delta_2 d\theta_1+\delta_3d\theta_2\right).\]
In particular, the foliation is a ``generalized Kronecker foliation'', i.e.\ its pullback to $\mathbb{R}^3$ via the
projection $\mathbb{R}^3\to S^1\times T^2$ is a foliation by the parallel affine planes
\[\{(x_1,x_2,x_3)\in\mathbb{R}^3: (1+\delta_1)x_1+\delta_2 x_2+\delta_3x_3=C\}_{C\in\mathbb{R}}.\]
This is quite remarkable, since the foliation $\ker(d\theta)$ can be $C^{\infty}$-approximated by foliations which
behave very differently, e.g.\ the family of foliations $\mathcal{F}_{\epsilon}$ given by the kernel of
$d\theta+\epsilon\chi(\theta)d\theta_1$, where $\chi$ is a function such that $\chi(\theta)=\theta$ for small $|\theta|$.
\end{example}


\section*{Appendix}

\begin{proof}[of lemma \ref{functorial1}]
Recall from \cite{David} the definition of the \emph{regularized volume}: namely, let $\omega$ be a top b-form
on a compact oriented b-manifold $(N,W)$. Its regularized volume is defined by the limit:
\[\mathrm{Vol}_N(\omega):=\lim_{\epsilon\to 0}\int_{\lambda\geq \epsilon}\omega,\]
where $\lambda:N\backslash W\to (0,\infty)$ is a distance function. This limit exists for any $\lambda$, and does not dependent on the choice of $\lambda$.

First we show that the map $\sigma:\Omega^{k-1}(Z)\to \Omega^k_Z(M)$ maps closed forms to forms that satisfy the condition from the lemma. For this, consider a b-map $i:(N,W)\to (M,Z)$ from a compact oriented manifold $N$ of dimension $k$, and consider a closed $k-1$-form $\mu$ on $Z$. Since $i$ is a b-map, note that $\lambda\circ i$ is a distance function for $(N,W)$; thus the volume can be computed by:
\[\mathrm{Vol}_N(i^*(\sigma(\mu))):=\lim_{\epsilon\to 0}\int_{N_{\epsilon}}i^*\Big(d\big(\log(\lambda)p^*(\mu) \big)\Big)=\lim_{\epsilon\to 0}\int_{N_{\epsilon}}d\big(\log(\lambda\circ i)(p\circ i)^*(\mu) \big),\]
where $N_{\epsilon}:=\{x\in N: \lambda(i(x))\geq \epsilon\}$. For $\epsilon$ small enough, we will prove that:
\begin{equation}\label{integral}
\int_{N_{\epsilon}}d\big(\log(\lambda\circ i)(p\circ i)^*(\mu) \big)=0.
\end{equation}
For small enough $\epsilon$, $N_{\epsilon}$ is a manifold with boundary $W_{\epsilon}:=\{x\in N:
\lambda(i(x))=\epsilon\}$, and by Stokes' theorem, the integral in (\ref{integral}) equals
$\log(\epsilon)\int_{W_{\epsilon}}(p\circ i)^*(\mu)$. Note that $W_{\epsilon}$, with the opposite orientation, is also the boundary of the manifold $E_{\epsilon}:=\{x\in N: \lambda(i(x))\leq \epsilon\}$. Therefore, again using Stokes' theorem we obtain 
\[\int_{W_{\epsilon}}(p\circ i)^*(\mu)=-\int_{E_{\epsilon}}d(p\circ i)^*(\mu)=-\int_{E_{\epsilon}}(p\circ i)^*(d\mu)=0,\]
which proves that (\ref{integral}) holds. 

Next, we claim that for any b-map $i:(N,W)\to (M,Z)$ from a $k$-dimensional, compact oriented b-manifold $(N,W)$,
and any exact $k$-b-form $d\eta$, we have that $\mathrm{Vol}_N(i^*(d\eta))=0$. Write $\eta=\alpha+\sigma(\beta)$,
for $\alpha\in \Omega^{k-1}(M)$ and $\beta\in\Omega^{k-2}(Z)$. Then
\[\mathrm{Vol}_N(i^*(\omega))=\mathrm{Vol}_N(di^*(\alpha))+\mathrm{Vol}_N(\sigma(d\beta)).\]
The first term vanishes, since it is the integral of an exact de-Rham form, and the second vanishes by the above. Thus,
we conclude that the b-map $i:(N,W)\to (M,Z)$ induces a map
\[\mathrm{Vol}_N:H^k_Z(M)\rmap \mathbb{R},\]
which vanishes on $\sigma(H^{k-1}(Z))$.

Let $C^k_Z(M)\subset H^{k}_Z(M)$ be the space of b-cohomology classes
$[\omega]\in H^{k}_Z(M)$ satisfying $\mathrm{Vol}_N(i^*(\omega))=0$ for all b-maps $i:(N,W)\to (M,Z)$ where
$(N,W)$ is a compact oriented $k$-dimensional b-manifold. By the above, we have that $\sigma(H^{k-1}(Z))\subset C^{k}_Z(M)$. Now, $H_k(M,\mathbb{R})$ is generated by smooth oriented submanifolds $i:N\to M$, which can be assumed to be transverse to $Z$, hence yielding b-maps
$i:(N,i^{-1}(Z))\to (M,Z)$. This implies that $C^k_Z(M)\cap H^k(M)=\{0\}$. Thus, using the decomposition (\ref{Mazzeo}), this shows that
$C^k_Z(M)=\sigma(H^{k-1}(Z))$.
\end{proof}

\bibliographystyle{amsplain}

\begin{thebibliography}{9}
\bibitem{Cavalcanti.2013} {G.~Cavalcanti}, `Examples and counter-examples of log-symplectic manifolds', (2013) arXiv:1303.6420.
\bibitem{Conn} {J.~Conn}, `Normal forms for smooth Poisson structures', {\em Ann. of Math.} (2) 121 (1985), no. 3, 565--593.
\bibitem{Crainic.Stability} {M.~Crainic, R.L.~Fernandes}, `Stability of symplectic leaves', {\em Invent. Math.} 180 (2010), no. 3, 481--533.
\bibitem{PDE} {P.~Freijlich, D.~Mart\'inez, \and E.~Miranda}, `Symplectic topology of b-manifolds', (2013) arxiv:1312.7329.
\bibitem{Gualtieri.2012} {M.~Gualtieri, \and S.~Li}, `Symplectic groupoids of log symplectic manifolds',  {\em Int Math Res Notices}, first published online March 1, 2013 doi:10.1093/imrn/rnt024
\bibitem{Guillemin.Eva.2012} {V.~Guillemin, E.~Miranda, \and A.~R.~Pires}, `Symplectic and Poisson geometry on b-manifolds', (2012) arXiv:1206.2020v2.
\bibitem{Guillemin.Eva.2011} {V.~Guillemin, E.~Miranda, \and A.~R.~Pires}, `Codimension one symplectic foliations and regular Poisson structures', {\em Bull. Braz. Math. Soc.} 42 (2011), no. 4, 607--623.
\bibitem{David} {D.~Mart\'inez~Torres}, `Global classification of generic multi-vector fields of top degree', {\em J. London Math. Soc. (2)} 69 (2004), no. 3, 751--766.
\bibitem{Marcut.Lie.Sphere} {I.~M\u{a}rcu\cb{t}}, `Deformations of the Lie-Poisson sphere of a compact semisimple Lie algebra', (2012) arXiv:1208.2298 (to appear in {\em Comp. Math.})
\bibitem{MO} {I.~M\u{a}rcu\cb{t}, \and B.~Osorno~Torres}, `On cohomological obstructions to the existence of log symplectic structures', (2013) arXiv:1303.6246.
\bibitem{Melrose.1993} {R.B.~Melrose}, `The Atiyah-Patodi-Singer index theorem', {\em Research Notes in Mathematics}, A.K. Peters, Wellesley, 1993.
\bibitem{Nest.1996}{R.~Nest, \and B.~Tsygan} `Formal deformations of symplectic manifolds with boundary', {\em J. Reine Angew. Math.} 481 (1996), 27-54.
\bibitem{Thesis} {B.~Osorno~Torres}, \emph{Ph.D. thesis. Utrecht University, 2014}. Work in progress
\bibitem{Radko.2002} {O.~Radko}, `A classification of topologically stable Poisson structures on a compact oriented surface', {\em J.~Symplectic Geom.} 1 (2002), no. 3, 523--542.
\bibitem{Scott}{G.~Scott}, `The geometry of $b^k$ manifolds', (2013) arXiv:1304.3821v2.
\bibitem{Tischler} {D.~Tischler}, `On fibering certain foliated manifolds over $S^1$', {\em Topology} 9 (1970) 153--154.
\end{thebibliography}
\def\lllll{}

\end{document}